\documentclass[10pt,journal,compsoc]{IEEEtran}

\usepackage{amsmath,amssymb,amsfonts}
\usepackage{comment}
\usepackage{placeins}
\usepackage{graphicx}
\usepackage{cite}
\usepackage{url}
\usepackage{booktabs}
\usepackage{amsmath,amssymb}
\usepackage{tikz}
\usetikzlibrary{arrows.meta,positioning,calc}
\usepackage{multirow}
\usepackage{amsthm}
\newtheorem{theorem}{Theorem}
\newtheorem{proposition}{Proposition}

\usepackage{algorithm}
\usepackage{algpseudocode}
\theoremstyle{definition}
\newtheorem{definition}{Definition}
\newtheorem{remark}{Remark}

\newcommand{\TO}{\mbox{\textsc{TO}}}

\begin{document}

\title{
\textbf{SWAP}: A
\textbf{S}calable,
\textbf{W}arm-Startable,
\textbf{A}nytime \textbf{P}ermutation Solver
for Optimal Transport
}

\author{Xuekai~Jiang, 
        Zheng'ao~Liu,
        Lingyun~Qiu,
        Shenwen~Yu%
\IEEEcompsocitemizethanks{%
\IEEEcompsocthanksitem L. Qiu is with the Yau Mathematical Sciences Center,
Tsinghua University, and the Yanqi Lake Beijing Institute
of Mathematical Sciences and Applications, Beijing, China.
E-mail: lyqiu@tsinghua.edu.cn.
\IEEEcompsocthanksitem X. Jiang and Z. Liu are with Qiuzhen College,
Tsinghua University, Beijing, China.
E-mail: jxk24@mails.tsinghua.edu.cn and lza24@mails.tsinghua.edu.cn.
\IEEEcompsocthanksitem S. Yu is with the Department of Mathematical Sciences,
Tsinghua University, Beijing, China.
E-mail: ysw22@mails.tsinghua.edu.cn.}%
}

\pagestyle{plain}

\IEEEtitleabstractindextext{%
\begin{abstract}
Large-scale and high-dimensional discrete optimal transport often involves a tension between scalability and exact assignment feasibility: many scalable approaches optimize relaxed, factorized, or sparse transport plans rather than maintaining a one-to-one assignment throughout optimization. We introduce SWAP, an iterative solver that operates directly in permutation space for equally weighted point clouds. 
Starting from any feasible assignment, SWAP uses geometry-aware proposals to identify cost-decreasing local rearrangements. Consequently, every iterate is a valid permutation, so the one-to-one constraints are satisfied exactly throughout the optimization without rounding or feasibility correction. SWAP avoids constructing an  $N\times N$ cost or coupling matrix and requires only $O(Nd)$ memory.
We establish monotone descent, introduce a family of cycle-stability conditions, prove that SWAP reaches pairwise stability almost surely under generic nonparallel conditions, and derive sufficient conditions under which pairwise stability implies global optimality.
Experiments on an ImageNet assignment with 640,500 points in 2,048 dimensions, synthetic benchmarks, and MERFISH spatial alignment demonstrate that SWAP achieves lower transport costs on challenging large-scale and high-dimensional problems, with substantially reduced runtime and memory compared with existing OT solvers.
\end{abstract}

\begin{IEEEkeywords}
Optimal Transport, optimization.
\end{IEEEkeywords}}

\maketitle
\thispagestyle{plain}
\raggedbottom

\IEEEdisplaynontitleabstractindextext
\IEEEpeerreviewmaketitle

\section{Introduction}
Optimal transport (OT) provides a mathematical framework for comparing probability distributions by seeking the most cost-efficient way to move mass from a source distribution to a target distribution. In the classical Monge formulation, transport is represented by a deterministic map that assigns each source location to a target location while transforming the source distribution into the target distribution at minimum cost. This formulation therefore naturally describes transport through point-to-point correspondences. The Kantorovich formulation relaxes this deterministic requirement by allowing mass to be distributed through a transport plan.

OT has found broad applications across statistics\cite{panaretosStatisticalAspects2019,ramdasWassersteinTwoSample2017,aguehBarycentersWasserstein2011,leonardSurveySchrodingerProblem2014}, machine learning\cite{genevayLearningGenerativeModels2018,courtyOptimalTransportDomain2017}, computer vision\cite{ferradansRegularizedDiscreteOptimal2014,bonneelDisplacementInterpolation2011}, generative modeling\cite{tongImprovingGeneralizingFlow2024}, scientific computing\cite{jordanVariationalFokkerPlanck1998,wibisonoSamplingOptimizationMeasures2018,bonneelDisplacementInterpolation2011,rabinWassersteinBarycenterTexture2011,taghvaeiOptimalTransportFPF2016}, and computational biology\cite{shenAccuratePointCloud2021,schiebingerOptimalTransportAnalysis2019}.
Among these applications, two complementary computational demands are particularly relevant to modern discrete OT: scalability to large and high-dimensional empirical distributions, and the ability to recover explicit one-to-one assignments when required.

First, many modern applications involve large-scale or high-dimensional empirical distributions, and often both. Representative examples include single-cell perturbation analysis, cell-state and multimodal single-cell alignment, large-scale developmental trajectory reconstruction, cross-modal retrieval, and domain adaptation
\cite{bunneLearningSinglecellPerturbation2023,
dongCausalIdentificationSinglecell2023,
kleinMappingCellsTime2025,
samaranScConfluenceSinglecellDiagonal2024,
hanLearningRematchMismatched2024,
liuCOTUnsupervisedDomain2023}.
In such settings, the number of samples, the ambient feature dimension, or both can be large, making the computational and memory costs of OT a central practical concern.

Second, in many matching and alignment applications, the desired output is not merely a transport cost or a soft coupling, but an explicit one-to-one correspondence. Such discrete matching structures arise in point-cloud scene flow, shape correspondence, bilingual lexicon induction, and entity alignment
\cite{puyFLOT2020,
liSelfPointFlowSelfSupervisedScene2021,
marchisioBilingualLexiconInduction2022a,
dingConflictAwarePseudoLabeling2022,
leIntegratingEfficientOptimal2024}.
Permutation-valued formulations also arise in other applications, such as the inference of latent event orderings in disease progression
\cite{wijeratneUnscramblingDiseaseProgression2024a}.
Although these applications may ultimately require discrete assignments or permutations, many existing OT-based approaches operate through relaxed or regularized transport plans, from which hard correspondences are extracted when needed.

Existing computational approaches address these problems through exact optimization, regularization, and structured approximation. Classical discrete OT is a transportation linear program that can be solved by simplex or network-simplex methods
\cite{dantzigApplicationSimplex1951,orlinPolynomialTimePrimal1997}.
For equally weighted measures with the same number of atoms, it reduces to a linear assignment problem, for which the Hungarian and Jonker--Volgenant algorithms return globally optimal permutations
\cite{kuhnHungarianMethodAssignment1955,jonkerShortestAugmentingPath1987}.
Other high-accuracy approaches include accelerated gradient, primal--dual, and splitting methods
\cite{dvurechenskyComputationalOptimalTransport2018,
jambulapatiDirectParallelOT2019,
chambolleAcceleratedBregmanPrimalDual2022,
maiFastAccurateSplitting2021},
while entropic regularization leads to the computationally attractive Sinkhorn algorithm
\cite{cuturiSinkhornDistancesLightspeed2013}.

A broad range of scalable methods further reduce the computational burden by exploiting approximation or additional problem structure. These include multiscale and hierarchical schemes
\cite{merigotMultiscaleApproachOptimal2011,schmitzerHierarchicalApproachOptimal2013},
factored and low-rank representations
\cite{forrowStatisticalOptimalTransport2019,linMakingTransportMore2021,scetbonLowRankOptimalTransport2022},
minibatch approaches
\cite{fatrasLearningMinibatchWasserstein2020,
fatrasUnbalancedMinibatchOptimal2021,
nguyenImprovingMinibatchOptimal2022},
projection-based methods
\cite{nguyenHierarchicalSlicedWasserstein2023,
mengLargescaleOptimalTransport2019},
and neural parameterizations of transport maps or potentials
\cite{makkuvaOptimalTransportMapping2020,
korotinNeuralOptimalTransport2021,
buzunExpectileRegularizationFast2024}.
Specialized geometric or algebraic structure can also yield substantial reductions in complexity
\cite{luoConeCompatibleMonge2026,zhangHOTEfficientHalpern2025}.
These approaches have greatly expanded the range of tractable OT problems, but scalability is often achieved through regularization, compression, surrogate subproblems, or additional structural assumptions. Moreover, methods formulated through transport plans generally do not maintain an explicit one-to-one assignment throughout the optimization. This leaves a gap for large-scale and high-dimensional discrete OT problems in which a feasible permutation is itself the desired output.

In this work, we consider the large-scale discrete Monge optimal transport
problem between two equally weighted point clouds
\[
X=\{x_i\}_{i=1}^N,\qquad
Y=\{y_j\}_{j=1}^N,
\]
where \(x_i,y_j\in\mathbb{R}^d\). The corresponding empirical measures are
defined as
\[
\mu=\frac{1}{N}\sum_{i=1}^N\delta_{x_i},
\qquad
\nu=\frac{1}{N}\sum_{j=1}^N\delta_{y_j}.
\]
For a given transport cost \(c(x,y)\), the equal-weight Monge
formulation seeks a permutation
\[
\sigma\in S_N
\]
that minimizes the total matching cost
\[
\min_{\sigma\in S_N}
C_c(\sigma)
:=
\sum_{i=1}^N c(x_i,y_{\sigma(i)}).
\]
The associated discrete optimal transport cost is then given by
\[
\mathcal{W}_c(\mu,\nu)
=
\frac{1}{N}
\min_{\sigma\in S_N}
\sum_{i=1}^N c(x_i,y_{\sigma(i)}).
\]
The quadratic cost
\[
c(x,y)=\|x-y\|^2
\]
corresponds to the classical empirical squared Wasserstein-2 distance
as a special case.

Our goal is to solve this permutation-valued transport problem directly
at scales for which conventional dense OT solvers become prohibitively
expensive. To this end, we propose \emph{SWAP}, an iterative framework
that operates entirely in the space of permutations. Starting from any
feasible assignment, SWAP repeatedly identifies local rearrangements
that decrease the original transport cost. To make this search efficient
at large scale, random one-dimensional projections are used to generate
structured candidate pairs, while all accepted updates are verified using
the original high-dimensional cost. Consequently, the algorithm avoids
forming a dense cost or coupling matrix, preserves a valid one-to-one
assignment throughout the optimization, and can be stopped or resumed at
any iteration.
Our main contributions are summarized as follows:
\begin{itemize}
    \item We propose \emph{SWAP}, which is designed for large-scale and high-dimensional transport problems. By generating structured candidate pairs through one-dimensional projections and evaluating only the proposed exchanges in the original space, the algorithm requires $O(L(Nd+N\log N))$ operations with \(L\) sliced directions and $O(Nd)$ memory, enabling efficient computation on datasets with very large $N$ and high ambient dimension $d$.
    
    \item SWAP is an iterative permutation-based method that is both warm-startable and anytime. Every iterate produced by SWAP is a valid permutation, so the algorithm can be terminated at any stage and still return a feasible one-to-one assignment without an additional rounding step. Moreover, any feasible permutation produced by another method can be used as an initialization and further refined under the original transport objective.

   \item We establish almost-sure convergence of SWAP to pairwise stability under generic nonparallel conditions and derive sufficient conditions under which pairwise stability implies global optimality.
\end{itemize}

The remainder of this paper is organized as follows.
Section~2 reviews related computational methods for discrete optimal
transport. Section 3 introduces SWAP and analyzes its computational complexity. Section 4 develops the cycle-stability, convergence, and optimality theory. Section 5 presents the numerical experiments. Section 6 concludes the paper.
\section{Related Work}

Existing methods for discrete optimal transport exhibit different
tradeoffs between exactness, scalability, memory consumption, and the
structure of the returned solution. We focus on three families of methods that are most closely related to the computational setting considered in this work.

\noindent\textbf{Exact Assignment and LP-Based Solvers.}
Classical discrete OT can be formulated as a transportation linear
program and solved using simplex, network-simplex, or minimum-cost-flow
methods
\cite{dantzigApplicationSimplex1951,
tarjanDynamicTreesSearch1997,
orlinPolynomialTimePrimal1997,
huangfuParallelizingDualRevised2018}.
For equally weighted point clouds of equal cardinality, the problem
reduces to a linear assignment problem. The Hungarian and
Jonker--Volgenant algorithms directly compute globally optimal
permutations
\cite{kuhnHungarianMethodAssignment1955,
jonkerShortestAugmentingPath1987}.
The Hungarian algorithm has worst-case time complexity $O(N^3)$, while
forming and storing the dense cost matrix requires $O(N^2)$ memory.
Accelerated gradient, primal--dual, and splitting methods provide other
high-accuracy approaches to the unregularized transport problem
\cite{dvurechenskyComputationalOptimalTransport2018,
jambulapatiDirectParallelOT2019,
chambolleAcceleratedBregmanPrimalDual2022,
maiFastAccurateSplitting2021}.
Although these methods solve the desired discrete problem directly, their
computational and memory costs make them difficult to apply to very large
point clouds.

\noindent\textbf{Entropically Regularized OT Method.}
Entropic regularization replaces the original discrete OT problem by a
smooth regularized problem that can be solved efficiently using
Sinkhorn matrix-scaling iterations
\cite{cuturiSinkhornDistancesLightspeed2013,
ferradansRegularizedDiscreteOptimal2014}.
Sinkhorn is often substantially more practical than exact assignment at
moderate problem sizes. Existing complexity analyses improve the
dependence required for an $\varepsilon$-accurate approximation of
unregularized OT from
$\widetilde{O}(N^2/\varepsilon^3)$ to
$\widetilde{O}(N^2/\varepsilon^2)$
\cite{dvurechenskyComputationalOptimalTransport2018,
luoImprovedComplexityAnalysis2023},
with related results for Greenkhorn and accelerated, stabilized, and
progressive variants
\cite{linEfficientOptimalTransport2019,
linEfficiencyEntropicRegularized2022,
guminovCombinationAlternatingMinimization2021,
schmitzerStabilizedSparseScaling2019,
kassraieProgressiveEntropicOptimal2024}.
However, classical dense Sinkhorn still requires an $N\times N$ cost or
kernel matrix, leading to $O(N^2)$ memory consumption and quadratic
work in the dominant matrix operations. Moreover, it solves a
regularized problem and generally returns a dense soft coupling rather
than an explicit permutation. When a hard one-to-one assignment is
required, an additional discretization or assignment step is therefore
needed.

\noindent\textbf{Sparse and Hierarchical OT Methods.}
Another line of scalable OT methods reduces the effective size of the transport problem by exploiting sparsity, multiscale organization, or problem-specific structure. Multiscale approaches solve transport problems over progressively refined representations, while sparse schemes restrict computation to selected source--target pairs rather than explicitly maintaining the full dense coupling
\cite{merigotMultiscaleApproachOptimal2011,
gerberMultiscaleStrategies2017,
schmitzerSparseMultiscaleAlgorithm2016}.

Hierarchical Refinement (HiRef)
\cite{halmosHierarchicalRefinementOptimal2025}
combines low-rank OT factorizations
\cite{scetbonLowRankSinkhornFactorization2021,
scetbonLowRankOptimalTransport2022,
halmosLowRankFactorRelaxation2024}
with recursive multiscale refinement. It uses low-rank transport subproblems to construct a hierarchy and progressively refines the resulting source and target clusters; for equally weighted point clouds of equal cardinality, this process ultimately produces a bijective source--target correspondence. For fixed refinement rank $r$ and algebraic cost rank $d_C$, HiRef has time complexity
$\Theta(Nd_Cr\log_r N)$
and linear memory complexity. For squared Euclidean costs in
$\mathbb{R}^d$, $d_C\leq d+2$, so its computational cost retains an explicit dependence on the ambient dimension. Moreover, its exact-recovery guarantee relies on globally solving the underlying low-rank factorized subproblems, which are generally nonconvex.

HALO \cite{xiaMemoryEfficientHierarchical2026} follows a different hierarchical strategy, combining multiscale discretization with sparse active-support refinement. Rather than representing all possible source--target interactions, it progressively maintains a restricted set of candidate pairs using multiscale information, geometric shielding, and dual-feasibility corrections
\cite{merigotMultiscaleApproachOptimal2011,
gerberMultiscaleStrategies2017,
schmitzerSparseMultiscaleAlgorithm2016}.
The resulting sparse transport problems are solved using factorization-free first-order techniques designed for large-scale parallel computation
\cite{applegatePracticalLargeScaleLP2021}.
For squared Euclidean costs, HALO can maintain an $O(N)$ active support under suitable geometric regularity and stability assumptions, thereby avoiding explicit storage of a dense transport plan. Its efficiency, however, relies on geometric structure that becomes more difficult to exploit as the ambient dimension increases.

A different structure-exploiting approach is HOT
\cite{zhangHOTEfficientHalpern2025}, which is designed for large-scale two-dimensional histogram OT with squared Euclidean ground cost. By exploiting the separability of the cost, HOT reformulates the original problem as an equivalent reduced linear program and solves it using a Halpern-accelerated first-order method together with a structure-exploiting linear-system solver. For $N=mn$ supports with $m=n$, it achieves
$O(N^{3/2}/\varepsilon)$ computational complexity and
$O(N^{3/2})$ memory complexity for an $\varepsilon$-approximate solution, from which an optimal transport plan for the original problem can be reconstructed. This reduction, however, relies on the special two-dimensional grid structure and squared Euclidean cost.

These methods achieve scalability by reducing or structuring the transport-plan representation through low-rank factorization, sparse active supports, hierarchical refinement, or specialized geometry. In contrast, SWAP does not explicitly construct or progressively refine a coupling-matrix representation. It operates directly in the permutation space, maintains an explicit feasible one-to-one assignment throughout optimization, and evaluates only structured candidate rearrangements under the original transport objective.

To summarize the discussion above, Table~\ref{tab:method-comparison}
compares the representative methods from four perspectives that are
particularly relevant to large-scale discrete Monge OT: whether the
method returns a valid permutation, the practical dimension regime, computational complexity, and
memory complexity. The reported memory complexities include storage of
the input point clouds, while the computational complexities include
the cost of evaluating the required pairwise distances when applicable.
For HOT and Sinkhorn, $\varepsilon$ denotes the target solution accuracy.
\begin{table*}[t]
\centering
\caption{Comparison of representative discrete optimal transport methods.
Here, $N$ denotes the number of points in each point cloud, $d$ the ambient
dimension, $r$ the refinement rank, $d_C$ the rank of the cost-matrix factorization used in HiRef and $L$ the total sliced directions used in SWAP. For squared Euclidean costs,
$d_C\le d+2$.}
\label{tab:method-comparison}
\begin{tabular}{lcccc}
\toprule
Method
& Valid permutation
& Practical dimensional regime
& Computational complexity
& Memory complexity \\
\midrule

Hungarian
& $\checkmark$
& High
& $O(N^2d+N^3)$
& $O(Nd+N^2)$ \\

Sinkhorn
& $\times$
& Medium
& $\tilde O(N^2/\varepsilon^2)$
& $O(Nd+N^2)$ \\

HOT
& $\times$
& $d=2$
& $O(N^{3/2}/\varepsilon)$
& $O(Nd+N^{3/2})$ \\

HiRef
& $\checkmark$
& High
& $O(Nd_Cr\log_r N)$
& $O(Nd+Nr)$ \\ 

HALO
& $\times$
& Medium
& Not specified
& $O(Nd)$ \\

SWAP
& $\checkmark$
& High
& $O\!\left(L(Nd+N\log N)\right)$
& $O(Nd)$ \\

\bottomrule
\end{tabular}
\end{table*}
\section{SWAP: Projection-Guided Permutation Refinement}

Our method is motivated by the cyclic monotonicity characterization of
optimal transport. For a permutation-valued transport plan, a cycle of source
indices represents a local rearrangement of the current matching: the targets
assigned to the selected source points are cyclically exchanged while all
other assignments remain unchanged. If such a rearrangement strictly decreases
the transport cost, we call the corresponding cycle a \emph{decreasing cycle}.

The absence of decreasing cycles of all possible lengths characterizes global
optimality. We recall the cyclic monotonicity characterization for discrete OT, which is proved in
\cite{villani2003topics}:
\begin{theorem}[Cyclic monotonicity for discrete optimal transport]
Let
\[
\mu=\frac1N\sum_{i=1}^N\delta_{x_i},
\qquad
\nu=\frac1N\sum_{i=1}^N\delta_{y_i},
\]
and let \(\sigma\in S_N\) be a matching permutation. Let \(c(x,y)\) be a
finite transport cost. Then \(\sigma\) is globally optimal if and only if its
matched set
\[
\{(x_i,y_{\sigma(i)})\}_{i=1}^N
\]
is \(c\)-cyclically monotone, namely, for every \(m\ge 2\) and every
collection of distinct indices \(i_1,\ldots,i_m\), with
\(i_{m+1}=i_1\), one has
\begin{equation}\label{cycle condition}
    \sum_{r=1}^m c(x_{i_r},y_{\sigma(i_r)})
    \le
    \sum_{r=1}^m c(x_{i_{r+1}},y_{\sigma(i_r)}).
\end{equation}
\end{theorem}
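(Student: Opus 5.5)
The plan is to prove the two directions separately. The necessity direction follows from a direct cycle-rerouting argument. Sufficiency rests on the classical fact that any two permutations differ by a product of disjoint cycles, so no appeal to Kantorovich duality is needed in the equal-weight, equal-cardinality setting.

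\emph{Necessity.} Suppose $\sigma$ is optimal. Fix distinct indices $i_1,\ldots,i_m$ with $i_{m+1}=i_1$, and define a new permutation $\tau$ by
\[
\tau(i_{r+1})=\sigma(i_r),\qquad r=1,\ldots,m,
\]
with $\tau(i)=\sigma(i)$ for all other indices $i$. Since the map $i_r\mapsto i_{r+1}$ is a bijection of $\{i_1,\ldots,i_m\}$, the permutation $\tau$ reassigns the same target set $\{\sigma(i_r)\}$ among the same sources, so $\tau\in S_N$. All unaffected terms cancel, and
\[
C_c(\tau)-C_c(\sigma)=\sum_{r=1}^m c(x_{i_{r+1}},y_{\sigma(i_r)})-\sum_{r=1}^m c(x_{i_r},y_{\sigma(i_r)}).
\]
Optimality gives $C_c(\tau)\ge C_c(\sigma)$, which is exactly \eqref{cycle condition}.

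\emph{Sufficiency.} Assume \eqref{cycle condition} holds for every cycle, and let $\tau\in S_N$ be arbitrary. Consider $\pi=\sigma^{-1}\circ\tau$, and decompose it into disjoint cycles, discarding its fixed points. On a fixed point $i$ we have $\tau(i)=\sigma(i)$, so that term contributes nothing to the difference $C_c(\tau)-C_c(\sigma)$. It therefore suffices to show that each nontrivial cycle contributes a nonnegative amount.

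The main bookkeeping step is to orient each cycle so that it matches the form of \eqref{cycle condition}. I would write a cycle of $\pi^{-1}=\tau^{-1}\circ\sigma$ as $(i_1\,i_2\,\cdots\,i_m)$, meaning $\tau(i_{r+1})=\sigma(i_r)$. Then $\tau$ permutes the targets $\{\sigma(i_r)\}$ among the sources $\{i_r\}$ exactly as in the necessity construction. Hence the cycle's contribution to $C_c(\tau)-C_c(\sigma)$ is
\[
\sum_{r}c(x_{i_{r+1}},y_{\sigma(i_r)})-\sum_r c(x_{i_r},y_{\sigma(i_r)}),
\]
which is $\ge 0$ by hypothesis. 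Because the cycles are disjoint, their contributions sum to $C_c(\tau)-C_c(\sigma)$, so $C_c(\tau)\ge C_c(\sigma)$.

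The only step requiring care is this orientation check: verifying that the cycles of $\sigma^{-1}\tau$, or of its inverse, produce precisely the shifted pairing $(x_{i_{r+1}},y_{\sigma(i_r)})$ rather than $(x_{i_{r-1}},y_{\sigma(i_r)})$. Either orientation works, since relabeling the indices in reverse order turns one form into the other, and the hypothesis quantifies over all orderings of distinct indices. Finiteness of $c$ guarantees that all the differences above are well defined.
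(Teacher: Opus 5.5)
Your proof is correct and complete. The orientation check is right: $\pi^{-1}(i_r)=i_{r+1}$ means $\tau(i_{r+1})=\sigma(i_r)$, and because the orbits of $\sigma^{-1}\circ\tau$ partition $\{1,\ldots,N\}$, the per-cycle differences sum to $C_c(\tau)-C_c(\sigma)$.

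The paper does not prove this statement itself; it defers to Villani's book. There the characterization belongs to the general Kantorovich theory, which links optimality of a coupling between arbitrary measures to $c$-cyclical monotonicity of its support via duality and $c$-concave potentials. To specialize that to the statement as written, the necessity direction also needs optimality over $S_N$ to be identified with optimality over all couplings (Birkhoff--von Neumann).

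Your route needs neither duality nor Birkhoff. It works directly in $S_N$ and uses only finiteness of $c$. It is the same disjoint-cycle decomposition the paper uses in the last step of its proof of Theorem~\ref{thm:anchored-tdec} in Appendix A.3. So it would make the identification $\Omega_N=\operatorname{argmin}_{\sigma} C_c(\sigma)$ in the $m$-cycle stability theorem fully self-contained.

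What the cited route buys in return is generality (non-atomic measures, unequal weights) and dual potentials as optimality certificates. That is the idea behind the anchored potential $p_i=a_{oi}(\sigma)$ in Theorem~\ref{thm:anchored-tdec}.
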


The case \(m=2\) in \eqref{cycle condition} gives
\[
c(x_{i_1},y_{\sigma(i_1)})
+c(x_{i_2},y_{\sigma(i_2)})
\leq
c(x_{i_1},y_{\sigma(i_2)})
+c(x_{i_2},y_{\sigma(i_1)}),
\]
which is the standard pairwise monotonicity condition. Therefore, a
decreasing cycle of length two corresponds exactly to a cost-decreasing pairwise exchange, which is referred to as a decreasing pair in this paper.
This special case will play a central role in the design and analysis of our
algorithm.

\subsection{Basic idea of SWAP}
We now introduce the main algorithm, SWAP. The basic idea of SWAP is to iteratively test whether exchanging
the target assignments of two source points can decrease the transport cost.
Let \(\sigma\in S_N\) be the current matching permutation, where \(x_i\) is
matched to \(y_{\sigma(i)}\). For two distinct indices
\(i,j\in\{1,\dots,N\}\), the \(i,j\)-swap of \(\sigma\) is the permutation
\(\sigma^{(i,j)}\in S_N\) defined by
\[
\sigma^{(i,j)}(i)=\sigma(j),
\qquad
\sigma^{(i,j)}(j)=\sigma(i),
\]
and
\[
\sigma^{(i,j)}(k)=\sigma(k),
\qquad
k\neq i,j.
\]
Thus, the swap exchanges the two target assignments of \(x_i\) and \(x_j\)
while leaving all other assignments unchanged. The \(i,j\)-swap decreases the
cost if and only if
\[
c(x_i,y_{\sigma(j)})+c(x_j,y_{\sigma(i)})
<
c(x_i,y_{\sigma(i)})+c(x_j,y_{\sigma(j)}).
\]
We refer to this inequality as the \emph{decreasing condition}, and we call a pair
\((i,j)\) satisfying it a \emph{decreasing pair},  which corresponds to a decreasing cycle of length two. For the quadratic cost, the preceding inequality is
equivalent to
\[
\langle x_i-x_j,\;y_{\sigma(i)}-y_{\sigma(j)}\rangle<0.
\]
This inner-product form is therefore a special simplification of the general
decreasing condition.
\subsection{Projection-induced owner proposals}
 Since exhaustively testing all $O(N^2)$ pairs is computationally prohibitive for large-scale problems, the central challenge is to identify decreasing pairs efficiently without enumerating all possible pairs. In this subsection, we introduce a sliced search strategy for identifying decreasing pairs. The main idea is motivated by one-dimensional optimal transport, where the optimal matching is characterized by sorting.

\begin{proposition}[Monotone rearrangement in one dimension]
Let
\[
\mu_N=\frac{1}{N}\sum_{i=1}^N\delta_{x_i},
\qquad
\nu_N=\frac{1}{N}\sum_{i=1}^N\delta_{y_i},
\]
where $x_i,y_i\in\mathbb R$. Assume that
\[
c(x,y)=h(|x-y|)
\]
satisfies the Monge condition
\[
c(x_1,y_1)+c(x_2,y_2)
\leq
c(x_1,y_2)+c(x_2,y_1)
\]
whenever $x_1\leq x_2$ and $y_1\leq y_2$. In particular, this holds when $h$ is nondecreasing and convex. Let $\sigma_1,\sigma_2\in S_N$ satisfy
\[
x_{\sigma_1(1)}
\leq\cdots\leq
x_{\sigma_1(N)}
\]
and
\[
y_{\sigma_2(1)}
\leq\cdots\leq
y_{\sigma_2(N)}.
\]
Then an optimal matching is given by
\[
x_{\sigma_1(r)}
\mapsto
y_{\sigma_2(r)},
\qquad
r=1,\ldots,N.
\]
\end{proposition}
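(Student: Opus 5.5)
We argue by an exchange (uncrossing) argument over permutations, using only the Monge condition as stated. After relabeling the $x$'s and $y$'s, we may assume $\sigma_1=\sigma_2=\mathrm{id}$, so that $x_1\le\cdots\le x_N$ and $y_1\le\cdots\le y_N$. The claim then becomes that the identity permutation minimizes $C_c(\tau)=\sum_i c(x_i,y_{\tau(i)})$ over $\tau\in S_N$. Since $S_N$ is finite, a minimizer $\tau^\ast$ exists. The goal is to transform $\tau^\ast$ into the identity without increasing the cost.

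The key step is uncrossing. Suppose $\tau\ne\mathrm{id}$. Then there exist $i<j$ with $\tau(i)>\tau(j)$; for instance, take the smallest $i$ with $\tau(i)\ne i$ and let $j=\tau^{-1}(i)>i$. Then $x_i\le x_j$ and $y_{\tau(j)}\le y_{\tau(i)}$. The Monge condition, applied with $x_1\to x_i$, $x_2\to x_j$, $y_1\to y_{\tau(j)}$, $y_2\to y_{\tau(i)}$, gives
\[
c(x_i,y_{\tau(j)})+c(x_j,y_{\tau(i)})\le c(x_i,y_{\tau(i)})+c(x_j,y_{\tau(j)}).
\]
Hence the swapped permutation $\tau^{(i,j)}$ has cost at most $C_c(\tau)$. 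With this choice of $i$ and $j$, the swap fixes $i$, since $\tau^{(i,j)}(i)=i$, while leaving $1,\dots,i-1$ fixed. The number of initial fixed points therefore strictly increases. Iterating at most $N$ times from $\tau^\ast$ reaches the identity, with cost at most $C_c(\tau^\ast)$. Therefore the identity is optimal, and undoing the relabeling gives exactly the matching $x_{\sigma_1(r)}\mapsto y_{\sigma_2(r)}$.

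It remains to prove that a nondecreasing convex $h$ satisfies the Monge condition. Fix $x_1\le x_2$ and $y_1\le y_2$. Let $a=x_1-y_1$, $b=x_2-y_2$, $p=x_1-y_2$, $q=x_2-y_1$. Then $p+q=a+b$ and $p\le\min(a,b)\le\max(a,b)\le q$, so $(p,q)$ majorizes $(a,b)$. The function $g(t)=h(|t|)$ is convex on $\mathbb R$, because $|\cdot|$ is convex and $h$ is convex and nondecreasing on $[0,\infty)$. Karamata's inequality, or directly writing $a$ and $b$ as convex combinations $a=\lambda p+(1-\lambda)q$ and $b=(1-\lambda)p+\lambda q$, then yields $g(a)+g(b)\le g(p)+g(q)$, which is the required inequality.

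The main obstacle is this last verification, in particular identifying the correct convex-combination weights $\lambda$ and handling the degenerate case $p=q$, where $a=b=p=q$ and the inequality is trivial. The exchange argument itself is routine once the potential (the number of initial fixed points) is chosen so that termination is evident. Ties in the sorted orders cause no difficulty, because the Monge condition is stated with non-strict inequalities.
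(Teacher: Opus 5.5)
Your proof is correct, and it is more complete than what the paper offers. The paper gives no proof of this proposition. It states it as the classical one-dimensional monotone rearrangement fact and later invokes it in the proof of the strict-Monge proposition. The closest argument in the paper is the inversion-removal exchange in Appendix A.2 for Monge cost matrices, and even there the optimality of the identity is cited from Burkard et al.\ rather than derived.

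Your argument supplies exactly that step. After relabeling, you start from a minimizer $\tau^\ast$ and uncross it to the identity using only the one-sided Monge inequality. The length of the initial fixed segment serves as a termination potential. Your choice $j=\tau^{-1}(i)$ with $i$ the first non-fixed index correctly gives $i<j$ and $\tau(i)>i=\tau(j)$, and the swap then fixes $i$.

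The direction of your argument is worth noticing. Because you begin at a minimizer, each swap only has to avoid increasing the cost, so no invariant has to be maintained along the swap sequence. The Appendix A.2 argument instead begins at a merely $2$-cycle stable permutation and needs every swap to be cost-neutral. That requires $2$-cycle stability of each intermediate permutation, not just the starting one, and the paper does not address this.

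You also verify the ``in particular'' clause, which the paper only asserts. The majorization step, using $p\le\min(a,b)\le\max(a,b)\le q$, $p+q=a+b$, and the convexity of $t\mapsto h(|t|)$, is correct, and the case $p=q$ is indeed trivial.

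Finally, your exchange step never uses the form $c=h(|x-y|)$. It therefore proves the result for any cost satisfying the stated Monge condition.
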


Thus, in one dimension, the equally weighted transport problem can be solved by sorting the two point clouds and matching points with the same rank. Although no such global ordering is available in higher dimensions, random one-dimensional projections provide useful ordering information for candidate pairs.

We first sample a random direction
\[
\theta\in\mathbb S^{d-1}
\]
and project the source and target point clouds onto this direction:
\[
s_i=\langle x_i,\theta\rangle,
\qquad
t_j=\langle y_j,\theta\rangle.
\]
Let $\sigma_1,\sigma_2\in S_N$ denote the corresponding sorting permutations:
\[
s_{\sigma_1(1)}
\leq
s_{\sigma_1(2)}
\leq\cdots\leq
s_{\sigma_1(N)}
\]
and
\[
t_{\sigma_2(1)}
\leq
t_{\sigma_2(2)}
\leq\cdots\leq
t_{\sigma_2(N)}.
\]
The one-dimensional monotone rearrangement then induces the temporary assignment
\[
x_{\sigma_1(r)}
\mapsto
y_{\sigma_2(r)},
\qquad
r=1,\ldots,N.
\]
Equivalently, we define the sorting-induced permutation $\tau_\theta\in S_N$ by
\[
\tau_\theta(\sigma_1(r))
=
\sigma_2(r),
\qquad
r=1,\ldots,N.
\]

The current matching is represented by $\sigma$, namely,
\[
x_i\mapsto y_{\sigma(i)}.
\]
If
\[
\tau_\theta(i)=\sigma(i),
\]
then the projected ordering agrees with the current assignment of $x_i$.
Otherwise, let
\[
j=\sigma^{-1}\bigl(\tau_\theta(i)\bigr).
\]
Since
\[
\sigma(j)=\tau_\theta(i),
\]
the source point $x_j$ currently owns the target suggested for $x_i$ by
the sliced matching. We therefore call $j$ the \emph{current owner} of
$\tau_\theta(i)$. The pair $(i,j)$ is then generated as a candidate pair; see
Fig.~\ref{fig:swap-pair}.

\begin{figure}[htbp]
\centering
\resizebox{0.88\columnwidth}{!}{%
\begin{tikzpicture}[
    >=Stealth,
    every node/.style={font=\small}
]

\definecolor{rankblue}{RGB}{30,110,210}
\definecolor{ownerorange}{RGB}{235,120,15}

\node[
    anchor=west,
    font=\bfseries
]
at (0.15,2.42)
{Projected sources};

\node[
    anchor=west,
    font=\bfseries
]
at (0.15,0.42)
{Projected targets};

\begin{scope}[xshift=1.60cm]

\draw[thick] (0.25,2) -- (6.55,2);
\draw[thick] (0.25,0) -- (6.55,0);

\fill (1.15,2) circle (2.3pt);
\fill (1.80,2) circle (2.3pt);

\coordinate (xi) at (2.85,2);
\coordinate (xj) at (5.85,2);

\fill (xi) circle (2.6pt);
\fill (xj) circle (2.6pt);

\node[above=5pt] at (xi) {$x_i$};
\node[above=5pt] at (xj) {$x_j$};

\fill (3.65,2) circle (2.3pt);
\fill (4.40,2) circle (2.3pt);
\fill (5.10,2) circle (2.3pt);

\coordinate (ysi) at (1.85,0);

\coordinate (ysj) at (3.60,0);

\fill (ysi) circle (2.6pt);
\fill (ysj) circle (2.6pt);

\node[below=7pt] at (ysi) {$y_{\sigma(i)}$};
\node[below=7pt] at (ysj) {$y_{\sigma(j)}$};

\fill (2.70,0) circle (2.3pt);

\fill (4.15,0) circle (2.3pt);
\fill (4.70,0) circle (2.3pt);
\fill (5.25,0) circle (2.3pt);
\fill (5.80,0) circle (2.3pt);

\draw[
    dashed,
    thick,
    rankblue,
    rounded corners=6pt
]
(1.45,-0.82) rectangle (6.20,2.82);

\node[
    rankblue,
    font=\bfseries
]
at (3.83,3.18)
{Candidate pair $(i,j)$};

\draw[
    ->,
    very thick,
    rankblue
]
($(xi)+(0,-0.05)$)
--
($(ysj)+(0,0.08)$);

\draw[
    ->,
    very thick,
    ownerorange
]
($(ysi)+(0.05,0.08)$)
to[bend left=24]
($(xi)+(-0.05,-0.08)$);

\draw[
    ->,
    very thick,
    ownerorange
]
($(ysj)+(0.08,0.08)$)
to[bend left=18]
($(xj)+(-0.08,-0.08)$);

\draw[
    ->,
    very thick,
    rankblue
]
(-0.30,-1.48) -- (0.30,-1.48);

\node[
    anchor=west,
    rankblue,
    font=\bfseries
]
at (0.45,-1.48)
{rank matching $\tau_\theta$};

\draw[
    ->,
    very thick,
    ownerorange
]
(3.65,-1.48) -- (4.65,-1.48);

\node[
    anchor=west,
    ownerorange,
    font=\bfseries
]
at (4.80,-1.48)
{current owner $\sigma^{-1}$};

\end{scope}

\end{tikzpicture}
}
\caption{
Illustration of the sliced generation of a candidate pair $(i,j)$.
}
\label{fig:swap-pair}
\end{figure}
Each generated candidate pair is subsequently tested using the original
high-dimensional decreasing condition, and only cost-decreasing pairs are accepted.
\begin{algorithm}[htbp]
\caption{SWAP with sliced projection search}
\label{alg:swap}
\begin{algorithmic}

\Require Source distribution $\mu$, target distribution $\nu$,
number of points $N$, initialization strategy $\mathcal{I}$

\Statex

\State \textbf{Step 1: Initialization.}
Obtain two equally weighted point clouds
\[
X=\{x_i\}_{i=1}^N,
\qquad
Y=\{y_j\}_{j=1}^N
\]
from $\mu$ and $\nu$ using a chosen sampling or discretization procedure,
and generate an initial permutation
\[
\sigma^{(0)}
\gets
\mathcal{I}(X,Y),
\qquad
\sigma^{(0)}\in S_N.
\]
Set $k\gets 0$.

\Statex

\While{the stopping criterion is not satisfied}

    \State \textbf{Step 2: Sliced search.}

    Define the current target assignment
    \[
    z_i=y_{\sigma^{(k)}(i)},
    \qquad
    i=1,\ldots,N.
    \]

    Sample a random projection direction
    \[
    \theta_k
    \sim
    \operatorname{Unif}(\mathbb S^{d-1}).
    \]

    Sort the source and current target point clouds according to their
    projections:
    \[
    \langle\theta_k,x_{i_1}\rangle
    \leq\cdots\leq
    \langle\theta_k,x_{i_N}\rangle
    \]
    and
    \[
    \langle\theta_k,z_{j_1}\rangle
    \leq\cdots\leq
    \langle\theta_k,z_{j_N}\rangle.
    \]

    Generate the candidate pairs
    \[
    \mathcal{C}_k
    =
    \left\{
    (i_r,j_r):
    i_r\neq j_r
    \right\}.
    \]

    \Statex

    \State \textbf{Step 3: Test the decreasing condition.}

    Set
    \[
    \sigma^{(k+1)}
    \gets
    \sigma^{(k)}.
    \]

    \For{each $(i,j)\in\mathcal{C}_k$}

        Compute
        \[
        \begin{aligned}
        \Delta_{ij}
        ={}&
        c\bigl(x_i,y_{\sigma^{(k+1)}(j)}\bigr)
        +
        c\bigl(x_j,y_{\sigma^{(k+1)}(i)}\bigr)\\
        &-
        c\bigl(x_i,y_{\sigma^{(k+1)}(i)}\bigr)
        -
        c\bigl(x_j,y_{\sigma^{(k+1)}(j)}\bigr).
        \end{aligned}
        \]

        \If{$\Delta_{ij}<0$}
            Swap the two assigned targets:
            \[
            \sigma^{(k+1)}(i)
            \leftrightarrow
            \sigma^{(k+1)}(j).
            \]
        \EndIf

    \EndFor

    \State $k\gets k+1$.

\EndWhile

\Statex
\State \Return $\sigma^{(k)}$

\end{algorithmic}
\end{algorithm}
The sliced search can therefore be summarized as
\[
\begin{aligned}
&\text{Sliced projection}
\Longrightarrow
\text{Generate candidate pairs}\\
&\Longrightarrow
\text{Full-dimensional decreasing condition test}
\end{aligned}
\]

The complete algorithm is summarized in Algorithm~\ref{alg:swap}.

The sliced projection strategy naturally extends the decreasing pair search
to the identification of decreasing cycles of larger length. Using the
current-owner relation defined above, each source index $i$ induces the
transition
\[
i
\longmapsto
\sigma^{-1}\bigl(\tau_\theta(i)\bigr).
\]
Starting from an index $i_1$ and repeatedly applying this transition generates
an owner chain.

More precisely, define
\[
j_\ell=\tau_\theta(i_\ell),
\qquad
i_{\ell+1}=\sigma^{-1}(j_\ell),
\qquad
\ell\geq1.
\]
This produces the alternating sequence
\[
x_{i_1}
\longrightarrow
y_{j_1}
\longrightarrow
x_{i_2}
\longrightarrow
y_{j_2}
\longrightarrow
\cdots,
\]
where $y_{j_\ell}$ is the target suggested for $x_{i_\ell}$ by the sliced
matching, and $x_{i_{\ell+1}}$ is its current owner. The construction terminates when either \(\ell=m\) or \(i_{\ell+1}=i_1\). 

When this construction closes after two source indices, it reduces to the
swap-pair mechanism used in the sliced search. More generally, an owner chain involving at most $m$ source points defines a localized block in which an improving
cycle of length up to $m$ can be identified efficiently; see
Fig.~\ref{fig:owner-chain-block}.

\begin{figure}[htbp]
\centering
\resizebox{\columnwidth}{!}{%
\begin{tikzpicture}[
    >=Stealth,
    every node/.style={font=\small}
]

\definecolor{rankblue}{RGB}{30,110,210}
\definecolor{ownerorange}{RGB}{235,120,15}

\node[
    anchor=west,
    font=\bfseries
]
at (0.15,2.42)
{Projected sources};

\node[
    anchor=west,
    font=\bfseries
]
at (0.15,0.42)
{Projected targets};

\begin{scope}[xshift=1.15cm]

\draw[thick] (0.20,2) -- (8.45,2);
\draw[thick] (0.20,0) -- (8.45,0);

\foreach \xx in {0.70,1.20,1.70}
{
    \fill (\xx,2) circle (2.3pt);
    \fill (\xx,0) circle (2.3pt);
}

\fill (2.20,2) circle (2.3pt);
\fill (2.65,2) circle (2.3pt);

\coordinate (x1) at (3.20,2);
\coordinate (x2) at (4.95,2);
\coordinate (x3) at (6.25,2);
\coordinate (x4) at (8.05,2);

\foreach \p in {x1,x2,x3,x4}
{
    \fill (\p) circle (2.6pt);
}

\node[above=5pt] at (x1) {$x_{i_1}$};
\node[above=5pt] at (x2) {$x_{i_2}$};
\node[above=5pt] at (x3) {$x_{i_3}$};
\node[above=5pt] at (x4) {$x_{i_4}$};

\fill (3.75,2) circle (2.3pt);
\fill (4.30,2) circle (2.3pt);

\fill (5.60,2) circle (2.3pt);

\fill (6.85,2) circle (2.3pt);
\fill (7.45,2) circle (2.3pt);

\coordinate (y1) at (2.20,0);

\fill (2.75,0) circle (2.3pt);

\coordinate (y2) at (3.50,0);
\coordinate (y3) at (5.25,0);
\coordinate (y4) at (6.55,0);

\foreach \p in {y1,y2,y3,y4}
{
    \fill (\p) circle (2.6pt);
}

\node[below=7pt] at (y1) {$y_{\sigma(i_1)}$};
\node[below=7pt] at (y2) {$y_{\sigma(i_2)}$};
\node[below=7pt] at (y3) {$y_{\sigma(i_3)}$};
\node[below=7pt] at (y4) {$y_{\sigma(i_4)}$};

\fill (4.05,0) circle (2.3pt);
\fill (4.60,0) circle (2.3pt);

\fill (5.90,0) circle (2.3pt);

\draw[
    dashed,
    thick,
    rankblue,
    rounded corners=6pt
]
(1.80,-0.82) rectangle (8.30,2.82);

\node[
    rankblue,
    font=\bfseries
]
at (5.10,3.18)
{Owner-chain refinement block};

\draw[
    ->,
    very thick,
    rankblue
]
($(x1)+(0,-0.05)$)
--
($(y2)+(0,0.08)$);

\draw[
    ->,
    very thick,
    rankblue
]
($(x2)+(0,-0.05)$)
--
($(y3)+(0,0.08)$);

\draw[
    ->,
    very thick,
    rankblue
]
($(x3)+(0,-0.05)$)
--
($(y4)+(0,0.08)$);

\draw[
    ->,
    very thick,
    ownerorange
]
($(y1)+(0.05,0.08)$)
to[bend left=18]
($(x1)+(-0.05,-0.08)$);

\draw[
    ->,
    very thick,
    ownerorange
]
($(y2)+(0.05,0.08)$)
to[bend left=18]
($(x2)+(-0.05,-0.08)$);

\draw[
    ->,
    very thick,
    ownerorange
]
($(y3)+(0.05,0.08)$)
to[bend left=18]
($(x3)+(-0.05,-0.08)$);

\draw[
    ->,
    very thick,
    ownerorange
]
($(y4)+(0.05,0.08)$)
to[bend left=18]
($(x4)+(-0.05,-0.08)$);

\draw[
    ->,
    very thick,
    rankblue
]
(-0.55,-1.52) -- (0.55,-1.52);

\node[
    anchor=west,
    rankblue,
    font=\bfseries
]
at (0.70,-1.52)
{rank matching $\tau_\theta$};

\draw[
    ->,
    very thick,
    ownerorange
]
(4.40,-1.52) -- (5.40,-1.52);

\node[
    anchor=west,
    ownerorange,
    font=\bfseries
]
at (5.55,-1.52)
{current owner $\sigma^{-1}$};

\end{scope}

\end{tikzpicture}
}
\caption{
Illustration of the owner-chain construction.
}
\label{fig:owner-chain-block}
\end{figure}
Specifically, for each owner-chain block involving $q$ source points
$(x_{i_1},\ldots,x_{i_q})$ and their associated target points
$(y_{\sigma(i_1)},\ldots,y_{\sigma(i_q)})$, we solve the corresponding local linear
assignment problem using LAP. If the optimal local reassignment decreases
the transport cost, it is accepted to update the current permutation.
Thus, the owner-chain construction provides a geometry-aware mechanism
for exploring higher-order local rearrangements without exhaustive
enumeration. Nevertheless, as demonstrated in Section~\ref{sec:synthetic-main},
the additional improvement obtained from this refinement does not compensate
for its computational overhead. In our experiments, standard pairwise
SWAP updates achieve better wall-clock efficiency, and therefore we use 
pairwise swaps as the default update strategy.
\subsection{Computational and memory complexity}
We briefly discuss the computational cost of the swap-pair search strategy.
Assume that each point cloud contains $N$ points in $\mathbb{R}^d$ and that
evaluating the cost $c(x,y)$ requires $O(d)$ operations. Hence,
testing whether a single candidate pair is a decreasing pair requires $O(d)$ operations.

For the sliced strategy, each sliced direction uses a one-dimensional projection to
generate structured candidate pairs. For a projection direction $\theta$,
computing
\[
s_i=\langle x_i,\theta\rangle,
\qquad
t_j=\langle y_j,\theta\rangle
\]
for all source and target points requires $O(Nd)$ operations. Sorting the two
sets of projected values costs $O(N\log N)$. After sorting, constructing the
temporary one-dimensional matching $\tau$, generating a set of
candidate pairs, and maintaining the inverse permutation $\sigma^{-1}$ require
only $O(N)$ additional operations. Testing the decreasing condition for all selected pairs requires at most $O(Nd)$ operations. Therefore, for $L$ sliced directions, the computational cost is
\[
O(L(Nd+N\log N)).
\]
Storing the two point clouds, projected coordinates, sorting indices, the current permutation and its inverse, and the candidate pairs requires $O(Nd)$ memory.
\section{Cycle Stability, Convergence, and Optimality}
In this section, we study the stability and convergence properties of SWAP and identify conditions under which pairwise stability implies global optimality.
\subsection{\(m\)-cycle stability}
The cyclic monotonicity theorem requires checking cycles of all possible
lengths. In practice, this is computationally infeasible for large-scale
problems. We therefore introduce a truncated version of cyclic monotonicity
by restricting the length of admissible decreasing cycles.

Let
\[
C_c(\sigma)=\sum_{i=1}^N c(x_i,y_{\sigma(i)})
\]
be the matching cost associated with a permutation
\(\sigma\in S_N\).

For distinct indices
\[
i_1,\ldots,i_m\in\{1,\ldots,N\},
\qquad 2\le m\le N,
\]
we denote the corresponding cycle by
\[
\gamma=(i_1,\ldots,i_m),
\]
with the convention
\[
i_{m+1}=i_1.
\]

The cyclic rearrangement induced by \(\gamma\), denoted by
\(\sigma^\gamma\), is defined by
\[
\sigma^\gamma(i_{r+1})=\sigma(i_r),
\qquad r=1,\ldots,m,
\]
and
\[
\sigma^\gamma(j)=\sigma(j),
\qquad
j\notin\{i_1,\ldots,i_m\}.
\]
Thus, \(\sigma^\gamma\) rotates the currently assigned targets among the
selected source points and leaves all other assignments unchanged.

The cost decrease induced by the cycle \(\gamma\) is
\[
\Delta_\gamma(\sigma)
=
\sum_{r=1}^m c(x_{i_{r+1}},y_{\sigma(i_r)})-\sum_{r=1}^m c(x_{i_r},y_{\sigma(i_r)}).
\]
If
\[
\Delta_\gamma(\sigma)<0,
\]
then \(\gamma\) is called a \emph{decreasing cycle}.

\begin{definition}[\(m\)-cycle stability]
Let \(2\le m\le N\). A permutation \(\sigma\in S_N\) is called
\(m\)-cycle stable if it contains no decreasing cycle of length at most \(m\).
\end{definition}

The case \(m=2\) corresponds to the absence of improving pairwise swaps.
Larger values of \(m\) rule out higher-order cyclic improvements.

\begin{theorem}[Monotonicity and global optimality of \(m\)-cycle stability]
Let \(\Omega_m\) denote the set of all \(m\)-cycle stable permutations.
Then the stability sets satisfy
\[
\Omega_N
\subseteq
\Omega_{N-1}
\subseteq
\cdots
\subseteq
\Omega_2 .
\]
Moreover,
\[
\Omega_N
=
\operatorname*{argmin}_{\sigma\in S_N}
\sum_{i=1}^N c(x_i,y_{\sigma(i)}) .
\]
In other words, \(N\)-cycle stability is equivalent to global optimality.
\end{theorem}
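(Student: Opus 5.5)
The nested inclusions follow directly from the definition. If $\sigma\in\Omega_{m}$ with $3\le m\le N$, then $\sigma$ has no decreasing cycle of length at most $m$. In particular it has none of length at most $m-1$, so $\sigma\in\Omega_{m-1}$. Chaining these inclusions for $m=N,N-1,\ldots,3$ gives the full chain $\Omega_N\subseteq\cdots\subseteq\Omega_2$.

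For the identity $\Omega_N=\operatorname*{argmin}_{\sigma}C_c(\sigma)$, I will rely on the cyclic monotonicity theorem stated earlier. It says that $\sigma$ is optimal iff inequality \eqref{cycle condition} holds for every $m\ge2$ and every collection of distinct indices $i_1,\ldots,i_m$. Since the indices are distinct and lie in $\{1,\ldots,N\}$, automatically $m\le N$. The one point to check is that \eqref{cycle condition} for the cycle $(i_1,\ldots,i_m)$ is exactly the statement $\Delta_\gamma(\sigma)\ge0$ for $\gamma=(i_1,\ldots,i_m)$. Unfolding the definition,
\[
\Delta_\gamma(\sigma)=\sum_{r=1}^m c(x_{i_{r+1}},y_{\sigma(i_r)})-\sum_{r=1}^m c(x_{i_r},y_{\sigma(i_r)}),
\]
and the right-hand side of \eqref{cycle condition} is the first sum while its left-hand side is the second. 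So \eqref{cycle condition} holds for $\gamma$ iff $\gamma$ is not a decreasing cycle.

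Hence $\sigma$ is $c$-cyclically monotone iff it has no decreasing cycle of any length $2\le m\le N$, i.e.\ iff $\sigma\in\Omega_N$. The cyclic monotonicity theorem then gives that this holds iff $\sigma$ is a global minimizer.

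The only potential subtlety is bookkeeping: the theorem ranges over ordered tuples of distinct indices, while $\Delta_\gamma$ is defined for the same ordered tuples with the same convention $i_{m+1}=i_1$. The two families are therefore literally identical and no further argument is needed. If one prefers a self-contained argument for the direction ``$\Omega_N\subseteq$ optimal'', it can be done directly. Given an optimal $\sigma^*$, decompose $\sigma^{-1}\circ\sigma^*$ into disjoint cycles. Then $C_c(\sigma^*)-C_c(\sigma)$ is the sum of the corresponding $\Delta_\gamma(\sigma)$, after orienting each cycle so that $\sigma^*$ equals $\sigma^\gamma$ on its support. Each of these terms is $\ge0$ when $\sigma\in\Omega_N$, so $C_c(\sigma)\le C_c(\sigma^*)$ and $\sigma$ is optimal. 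I expect this orientation check to be the only place needing care.
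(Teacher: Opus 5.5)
Your proposal is correct and takes the same approach as the paper. The inclusions follow directly from the definition, and $\Omega_N$ is identified with $c$-cyclic monotonicity by noting that \eqref{cycle condition} for $\gamma$ is exactly $\Delta_\gamma(\sigma)\ge 0$, after which the cyclic monotonicity theorem finishes the proof; your optional self-contained direction via the disjoint-cycle decomposition of $\sigma^{-1}\circ\sigma^*$ is also sound (it mirrors the decomposition step at the end of the paper's proof of the anchored exchange-circulation theorem), though it is not needed.
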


\begin{proof}
First, we prove the monotonicity. Let \(2\le \ell\le m\le N\), and suppose
that \(\sigma\in\Omega_m\). By definition, no decreasing cycle of
length at most \(m\) can decrease the cost. Since every cycle of length at
most \(\ell\) is also a cycle of length at most \(m\), no cycle of length at
most \(\ell\) can decrease the cost. Hence,
\[
\sigma\in\Omega_\ell.
\]
Therefore,
\[
\Omega_N
\subseteq
\Omega_{N-1}
\subseteq
\cdots
\subseteq
\Omega_2 .
\]

It remains to identify \(\Omega_N\). By definition, \(\sigma\in\Omega_N\)
if and only if for every \(k=2,\ldots,N\) and every collection of distinct
indices \(i_1,\ldots,i_k\), with \(i_{k+1}=i_1\), one has
\[
\sum_{r=1}^k c(x_{i_r},y_{\sigma(i_r)})
\le
\sum_{r=1}^k c(x_{i_{r+1}},y_{\sigma(i_r)}).
\]
This is exactly the cyclic monotonicity condition, which is equivalent to the
global optimality of \(\sigma\) by the cyclic monotonicity theorem. Therefore,
\[
\Omega_N
=
\operatorname*{argmin}_{\sigma\in S_N}
\sum_{i=1}^N c(x_i,y_{\sigma(i)}) .
\]
\end{proof}

The family of \(m\)-cycle stability conditions naturally motivates a local search strategy. In principle, global optimality requires ruling out decreasing cycles of all lengths up to \(N\), and searching over higher-order cycles can potentially yield lower-cost solutions. However, as demonstrated by our numerical experiments, higher-order owner-chain refinement introduces substantial computational overhead and reduces wall-clock efficiency. This tradeoff motivates the pairwise-swap strategy underlying SWAP: it provides a computationally inexpensive local search, whereas higher-order owner-chain refinement incurs substantially greater computational cost and is therefore less efficient in practice.

\subsection{Almost-Sure Convergence to Pairwise Stability 
}
In this section, we show that, under a mild nonparallel condition, the SWAP method reaches $\Omega_2$ almost surely after finitely many iterations. The key step is to prove that any prescribed improving pair has a strictly positive probability of being generated as a candidate pair by a random one-dimensional projection.

\begin{theorem}[Positive-probability candidate pair proposal]\label{thm:parallel}
Let
\[
X=\{x_1,\ldots,x_N\}\subset\mathbb R^d,
\;
Z=\{z_1,\ldots,z_N\}\subset\mathbb R^d,
\;
d\geq2,
\]
where
\[
z_j=y_{\sigma(j)}
\]
is the target currently owned by $x_j$. Fix two distinct indices
$i$ and $j$. Assume that
\[
x_i-x_k
\not\parallel
x_i-x_{k'}
\qquad
\text{for all distinct }k,k'\neq i,
\tag{NP1}
\]
\[
z_j-z_\ell
\not\parallel
z_j-z_{\ell'}
\qquad
\text{for all distinct }\ell,\ell'\neq j,
\tag{NP2}
\]
and
\[
x_i-x_k
\not\parallel
z_j-z_\ell
\qquad
\text{for all }k\neq i,\ \ell\neq j.
\tag{NP3}
\]
We refer to \textup{(NP1)--(NP3)} collectively as the
\emph{nonparallel condition}.

Let $\Theta$ be uniformly distributed on $\mathbb S^{d-1}$.
A projection tie is said to occur in a direction $\theta$ if two
distinct source points or two distinct target points have the same
projected value, i.e.,
\[
\langle\theta,x_k\rangle
=
\langle\theta,x_{k'}\rangle
\]
for some $k\neq k'$, or
\[
\langle\theta,z_\ell\rangle
=
\langle\theta,z_{\ell'}\rangle
\]
for some $\ell\neq\ell'$.

For a direction $\theta$ without projection ties, define
\[
r_X(i;\theta)
=
1+
\#\left\{
k:
\langle\theta,x_k\rangle
<
\langle\theta,x_i\rangle
\right\},
\]
and
\[
r_Z(j;\theta)
=
1+
\#\left\{
\ell:
\langle\theta,z_\ell\rangle
<
\langle\theta,z_j\rangle
\right\}.
\]
Then
\[
\mathbb P_{\Theta}
\left(
r_X(i;\Theta)=r_Z(j;\Theta)
\right)>0.
\]
Consequently, the pair $(i,j)$ is generated as a  candidate pair
with strictly positive probability. This result is independent of the
choice of transport cost and therefore holds, in particular, for any
distance-based cost.
\end{theorem}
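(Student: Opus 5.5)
The plan is to show that the set of good directions $G=\{\theta\in\mathbb S^{d-1}:\text{no tie},\ r_X(i;\theta)=r_Z(j;\theta)\}$ contains a nonempty relatively open subset of the sphere. Since the uniform measure charges every nonempty open set, this gives $\mathbb P_\Theta(G)>0$. I assume the points within $X$ and within $Z$ are pairwise distinct, which is implicit in (NP1)--(NP3), since a zero difference vector would be parallel to everything. Then every tie set $\{\theta:\langle\theta,x_k-x_{k'}\rangle=0\}$ or $\{\theta:\langle\theta,z_\ell-z_{\ell'}\rangle=0\}$ is a great subsphere of codimension one. Let $\mathcal T$ be the finite union of these subspheres; it has measure zero. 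On each connected component of $\mathbb S^{d-1}\setminus\mathcal T$ the orderings of both projected clouds are constant, so $r_X(i;\cdot)$ and $r_Z(j;\cdot)$ are constant there. It therefore suffices to find one point $\theta^*\notin\mathcal T$ with $f(\theta^*):=r_X(i;\theta^*)-r_Z(j;\theta^*)=0$: the component containing $\theta^*$ is then an open set of good directions.

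\textbf{Step 1: the hyperplanes are distinct.} The only subspheres at which $f$ can jump are
\[
H^X_k=\{\theta:\langle\theta,x_i-x_k\rangle=0\},\ k\neq i,\qquad
H^Z_\ell=\{\theta:\langle\theta,z_j-z_\ell\rangle=0\},\ \ell\neq j,
\]
because ties among other pairs do not change the rank of $x_i$ or of $z_j$. Conditions (NP1), (NP2) and (NP3) say exactly that all these normals are pairwise nonparallel. Hence the subspheres $H^X_k,H^Z_\ell$ are pairwise distinct, and any two of them meet in a set of codimension two in $\mathbb S^{d-1}$.

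\textbf{Step 2: a generic antipodal path.} Pick $\theta_0\notin\mathcal T$ and a path $\gamma$ from $\theta_0$ to $-\theta_0$ with the following properties:
\begin{itemize}
\item it avoids all pairwise intersections $H\cap H'$ of distinct subspheres from $\mathcal T$;
\item it crosses each subsphere transversally, at finitely many points.
\end{itemize}
For $d\ge3$ such a path exists by a general-position argument, since the intersections have codimension two. For $d=2$, take $\gamma$ to be the half circle; the subspheres are pairs of antipodal points, which are distinct by Step 1. Projecting onto $-\theta_0$ reverses every order, so $r_X(i;-\theta_0)=N+1-r_X(i;\theta_0)$, and likewise for $z_j$. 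Hence $f(-\theta_0)=-f(\theta_0)$.

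\textbf{Step 3: discrete intermediate value argument.} Along $\gamma$, $f$ changes only when $\gamma$ crosses some $H^X_k$ or $H^Z_\ell$. By Step 2 it crosses these one at a time. Crossing $H^X_k$ swaps the relative order of $x_i$ and $x_k$ only, changing $r_X(i)$ by exactly $\pm1$ while $r_Z(j)$ is unchanged; crossing $H^Z_\ell$ changes $r_Z(j)$ by $\pm1$ while $r_X(i)$ is unchanged. Crossings of other subspheres in $\mathcal T$ leave $f$ unchanged. So $f$ is an integer-valued step function along $\gamma$ that changes by $\pm1$ at each jump and goes from $f(\theta_0)$ to $-f(\theta_0)$. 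It must therefore vanish on some open subarc of $\gamma\setminus\mathcal T$, and any point of that subarc serves as $\theta^*$.

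The consequence follows directly from the construction of $\mathcal C_k$. If the two ranks agree and equal $r$, then $i_r=i$ and $j_r=j$, so $(i,j)\in\mathcal C_k$ because $i\neq j$. No property of $c$ is used anywhere, so the result holds for any transport cost. The main obstacle is Step 2: making the general-position argument rigorous, namely that the path can avoid the codimension-two intersections while every crossing stays transversal and simple. This is precisely where (NP1)--(NP3) are needed. If two of the subspheres coincided, a single crossing could change $r_X(i)$ and $r_Z(j)$ simultaneously, or change $r_X(i)$ by $2$, and $f$ could skip over $0$.
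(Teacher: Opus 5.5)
Your proposal is correct and follows the same route as the paper's proof. Both arguments use the antisymmetry $f(-\theta)=-f(\theta)$, a generic path from $\theta_0$ to $-\theta_0$ along which (NP1)--(NP3) force $f$ to change only in unit steps, and openness of the tie-free cell containing a zero of $f$. Your version is more careful than the paper's sketch. The step you flag as the main obstacle is easily made concrete: take the half great circle $t\mapsto\cos t\,\theta_0+\sin t\,u$, $t\in[0,\pi]$, with $u\perp\theta_0$ generic. Because $\theta_0$ is tie-free, this path meets each tie subsphere exactly once, with a sign change, and for almost every such $u$ it misses all pairwise intersections of distinct subspheres.
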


The proof uses a continuous-path argument together with the fact that the rank
difference changes only by unit steps; details are provided in Appendix A.1.

\begin{remark}
The nonparallel conditions require only distinct directions, not linear
independence, and therefore allow $N\gg d$. Moreover, for independently
sampled point clouds from absolutely continuous distributions, configurations
violating these conditions form a measure-zero set.
\end{remark}

In particular, if $(i,j)$ is a decreasing pair under the current
permutation, the previous theorem shows that it has a strictly positive
probability of being generated as a candidate pair. This property implies
that repeated independent sliced searches cannot remain indefinitely at a
permutation that still admits an improving swap.

\begin{theorem}[Almost-sure convergence to $\Omega_2$]
Assume that, at every iterate $\sigma\notin \Omega_2$, there exists at least one decreasing pair $(i,j)$ for which the nonparallel conditions of Theorem ~\ref{thm:parallel} hold. Suppose that independent random projection
directions are sampled at successive iterations and that an improving update is
performed whenever a decreasing pair is identified. Then SWAP
reaches $\Omega_2$ almost surely after finitely many iterations.
\end{theorem}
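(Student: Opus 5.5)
The argument combines three facts: the cost strictly decreases along iterates, $S_N$ is finite, and Theorem~\ref{thm:parallel} gives a proposal probability bounded away from zero at every non-stable iterate. Together these give a geometric tail bound on the number of iterations spent outside $\Omega_2$.

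\emph{Monotone descent and finiteness.} In Algorithm~\ref{alg:swap}, a swap is executed only when $\Delta_{ij}<0$, and $\Delta_{ij}$ is evaluated against the current $\sigma^{(k+1)}$. So every accepted swap strictly lowers $C_c$, and $C_c(\sigma^{(k+1)})\le C_c(\sigma^{(k)})$, with equality only if no swap was accepted in iteration $k$. Since $S_N$ is finite, $C_c$ takes finitely many values. Hence the number of iterations with at least one accepted swap is at most $|S_N|=N!$ (more sharply, the number of distinct cost values), deterministically and independently of the random directions.

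\emph{Uniform positive probability of progress.} Consider an iterate $\sigma=\sigma^{(k)}\notin\Omega_2$. By hypothesis there is a decreasing pair $(i,j)$ satisfying (NP1)--(NP3) with $z=y_{\sigma(\cdot)}$. By Theorem~\ref{thm:parallel},
\[
p(\sigma):=\mathbb P_\Theta\bigl(r_X(i;\Theta)=r_Z(j;\Theta)\bigr)>0 .
\]
Projection ties occur on a null set, so we may ignore them. On this event, $(i,j)=(i_r,j_r)$ for the common rank $r$, so $(i,j)\in\mathcal C_k$.

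The subtle step, and the main obstacle, is the in-loop updating: pairs processed earlier in the same sweep may change $\sigma^{(k+1)}$ before $(i,j)$ is tested, so $(i,j)$ may no longer be decreasing when it is reached. I would handle this with a case split on the event that $(i,j)$ is proposed.
\begin{itemize}
\item If some earlier pair in $\mathcal C_k$ was accepted, the cost has already strictly decreased.
\item If no earlier pair was accepted, $\sigma^{(k+1)}=\sigma^{(k)}$ when $(i,j)$ is tested, so $\Delta_{ij}<0$ and the swap is accepted.
\end{itemize}
Either way $C_c(\sigma^{(k+1)})<C_c(\sigma^{(k)})$. (The pairs in $\mathcal C_k$ are indexed by position in the source and target orderings; one should check that the candidate is interpreted as source $i$ and the source currently owning $z_j$, which is $j$ itself.)

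Now set $p_*:=\min\{p(\sigma):\sigma\notin\Omega_2\}$. This is a minimum over a finite set of positive numbers, so $p_*>0$. Because $\theta_k$ is independent of $\mathcal F_k=\sigma(\theta_0,\dots,\theta_{k-1})$ and $\sigma^{(k)}$ is $\mathcal F_k$-measurable, we get
\[
\mathbb P\bigl(\text{strict decrease at step }k\mid\mathcal F_k\bigr)\ge p_*
\qquad\text{on }\{\sigma^{(k)}\notin\Omega_2\}.
\]

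\emph{Conclusion.} Let $T$ be the first time with $\sigma^{(T)}\in\Omega_2$. Once $\Omega_2$ is reached, every candidate pair fails the test, so the iterate is frozen there and $\Omega_2$ is absorbing. Before time $T$, each step is a strict decrease with conditional probability at least $p_*$, and there can be at most $M=N!$ strict decreases in total. Thus $\{T>n\}$ requires at least $n-M$ steps without a decrease, each of which fails with conditional probability at most $1-p_*$. A standard binomial-tail or Borel--Cantelli argument then gives $\mathbb P(T>n)\to0$; even the crude bound $\mathbb P(T>n)\le\binom{n}{M}(1-p_*)^{n-M}$ suffices. Hence $T<\infty$ almost surely, and SWAP reaches $\Omega_2$ after finitely many iterations almost surely.
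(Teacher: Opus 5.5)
Your proof is correct and follows the same route as the paper: strictly decreasing accepted swaps, finiteness of $S_N$, and the positive proposal probability from Theorem~\ref{thm:parallel} together rule out stalling forever outside $\Omega_2$. You are more careful than the paper on two points it treats informally: the case split showing that an iteration still strictly lowers the cost even when earlier in-sweep swaps change $\sigma^{(k+1)}$ before $(i,j)$ is tested, and the uniform bound $p_*>0$ with the tail estimate $\mathbb P(T>n)\le\binom{n}{M}(1-p_*)^{n-M}$, which replaces the paper's step that ``remaining forever at a fixed permutation has probability zero.''
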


\begin{proof}
Suppose that the current permutation satisfies
\[
\sigma\notin\Omega_2.
\]
Then there exists at least one decreasing pair $(i,j)$. By the previous
theorem, this pair is generated as a candidate pair with some strictly
positive probability. Therefore, as long as the current permutation remains
unchanged, repeated independent random projections identify an improving swap
pair with probability one.

Every accepted swap strictly decreases the transport cost. Hence the algorithm
cannot revisit a previously visited permutation. Since $S_N$ contains only
finitely many permutations, only finitely many strictly decreasing updates are
possible.

If the algorithm never reached $\Omega_2$, it would therefore eventually
remain forever at some fixed permutation outside $\Omega_2$. However, the
probability of remaining forever at any such permutation is zero by the
positive-probability proposal result. Consequently, the sliced search reaches
$\Omega_2$ almost surely after finitely many iterations.
\end{proof}

Therefore, repeated random sliced searches almost surely drive the permutation
to $\Omega_2$, where no further decreasing pair exists. 
\subsection{Sufficient conditions for global optimality}

In the previous section, we proved that, under a mild nonparallel condition, the SWAP method reaches \(\Omega_2\) almost surely after finitely many iterations. In general, however,
\[
\Omega_N \subseteq \Omega_2,
\]
so pairwise stability does not necessarily imply global optimality. In particular, a permutation may be stable with respect to all pairwise exchanges while still admitting a cost-decreasing cycle involving three or more elements.

In one dimension, such a situation cannot occur due to the ordering
structure induced by Monge costs. We first recall this classical case.

\begin{proposition}
Assume that \(x_i,y_i\in\mathbb R\) and
\[
    c(x,y)=h(|x-y|)
\]
satisfies the strict Monge condition
\[
c(x_1,y_1)+c(x_2,y_2)
<
c(x_1,y_2)+c(x_2,y_1),
\]
whenever \(x_1<x_2\) and \(y_1<y_2\). In particular, this includes $h(r)=r^p$ with $p>1$.

If a permutation \(\sigma\in S_N\) contains no improving pairwise
swap, then \(\sigma\) is globally optimal for
\[
C_c(\sigma)=\sum_{i=1}^N c(x_i,y_{\sigma(i)}).
\]
Consequently,
\[
    \Omega_2=\Omega_N .
\]
\end{proposition}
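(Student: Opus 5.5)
The plan is to show that a pairwise-stable permutation must be \emph{monotone}: after sorting, it matches sources to targets in the same rank order. The Monotone Rearrangement Proposition then gives global optimality.

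\textbf{Step 1: the no-crossing property.} Suppose \(\sigma\) has no improving pairwise swap. Take any \(i,j\) with \(x_i<x_j\). We claim that \(y_{\sigma(i)}>y_{\sigma(j)}\) is impossible. If it held, apply the strict Monge condition with \(x_1=x_i\), \(x_2=x_j\), \(y_1=y_{\sigma(j)}\), \(y_2=y_{\sigma(i)}\). This gives
\[
c(x_i,y_{\sigma(j)})+c(x_j,y_{\sigma(i)})<c(x_i,y_{\sigma(i)})+c(x_j,y_{\sigma(j)}),
\]
which is exactly the decreasing condition for the pair \((i,j)\), a contradiction. Hence \(x_i<x_j\) implies \(y_{\sigma(i)}\le y_{\sigma(j)}\).

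\textbf{Step 2: choosing sorting permutations.} Points may coincide, so we pick sorting permutations adapted to \(\sigma\). Let \(\sigma_1\) sort the \(x\)'s, breaking ties among equal \(x\)-values by the value of \(y_{\sigma(\cdot)}\). Set \(\sigma_2=\sigma\circ\sigma_1\).

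We check that \(y_{\sigma_2(r)}\) is nondecreasing in \(r\). Consider consecutive ranks \(r<r+1\):
\begin{itemize}
\item If \(x_{\sigma_1(r)}<x_{\sigma_1(r+1)}\), Step 1 gives \(y_{\sigma_2(r)}\le y_{\sigma_2(r+1)}\).
\item If \(x_{\sigma_1(r)}=x_{\sigma_1(r+1)}\), the tie-breaking rule gives the same inequality.
\end{itemize}
So \(\sigma_1,\sigma_2\) are valid sorting permutations for the two clouds, and \(\sigma\) is the rank matching \(x_{\sigma_1(r)}\mapsto y_{\sigma_2(r)}\).

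\textbf{Step 3: optimality.} The strict Monge condition implies the weak one for strictly ordered points. It also holds trivially with equality when \(x_1=x_2\) or \(y_1=y_2\), since then both sides are the same sum. So the hypotheses of the Monotone Rearrangement Proposition are met. That proposition says this rank matching is optimal, so \(\sigma\in\Omega_N\). Combined with \(\Omega_N\subseteq\Omega_2\) from the monotonicity theorem for \(m\)-cycle stability, this gives \(\Omega_2=\Omega_N\).

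\textbf{Where care is needed.} The main subtlety is handling ties, which Step 2 settles by choosing \(\sigma_1\) compatibly with \(\sigma\). Because the argument only uses the Monge condition, there is no need to verify separately that \(h(r)=r^p\) with \(p>1\) satisfies it. If one wanted to check that example, strict convexity of \(h\) would give it.
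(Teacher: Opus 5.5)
Your proof is correct and follows the same route as the paper. Both arguments rule out any crossing $x_i<x_j$, $y_{\sigma(i)}>y_{\sigma(j)}$ via the strict Monge inequality and then invoke the one-dimensional monotone rearrangement proposition. Your Step 2 (tie-breaking so that $\sigma$ is literally a rank matching) and your check that the weak Monge hypothesis holds when $x_1=x_2$ or $y_1=y_2$ spell out details that the paper's appeal to ``order preserving'' leaves implicit.
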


\begin{proof}
Suppose that the matching is not order preserving. Then there exist \(i,j\)
such that
\[
x_i<x_j,
\qquad
y_{\sigma(i)}>y_{\sigma(j)}.
\]
Applying the strict Monge inequality to the ordered pairs
\(x_i<x_j\) and \(y_{\sigma(j)}<y_{\sigma(i)}\) gives
\[
c(x_i,y_{\sigma(j)})+c(x_j,y_{\sigma(i)})
<
c(x_i,y_{\sigma(i)})+c(x_j,y_{\sigma(j)}).
\]
Thus, \((i,j)\) is an improving swap, contradicting the assumption. Hence the
matching is order preserving. By the one-dimensional monotone rearrangement
theorem, every order-preserving matching is globally optimal.
\end{proof}

The above argument relies essentially on the existence of a global ordering. In higher dimensions, no such ordering exists in general, and 2-cycle stability is no longer sufficient for global
optimality. The cyclic monotonicity characterization of optimal transport shows that higher-order cycles must also be controlled.

We next present two sufficient conditions under which pairwise stability guarantees global optimality. More precisely, if a permutation \(\sigma\in\Omega_2\) satisfies either of these conditions, then \(\sigma\in\Omega_N\), and hence \(\sigma\) is globally optimal.
The first is the classical Monge array condition, while the second is
a new exchange-circulation condition tailored to the swap framework.

\begin{proposition}
    Let
\[
    C_{ij}:=c(x_i,y_j),
    \qquad i,j\in\{1,\ldots,N\},
\]
and suppose that the source and target indices are ordered such that
the cost matrix \(C\) satisfies the Monge condition
\begin{equation}\label{eq:monge-cost-matrix}
    C_{ij}+C_{k\ell}
    \leq
    C_{i\ell}+C_{kj},
    \qquad
    i<k,\quad j<\ell.
\end{equation}
Then:
\[
    \Omega_2=\Omega_N.
\]
\end{proposition}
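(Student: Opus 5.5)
The plan is to compare an arbitrary pairwise stable permutation with the identity permutation, which is the natural optimal candidate for a Monge array. There are three steps.

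\textbf{Step 1 (the identity is optimal).} Call a pair $i<k$ with $\sigma(i)>\sigma(k)$ an \emph{inversion} of $\sigma$. Applying \eqref{eq:monge-cost-matrix} with $j=\sigma(k)<\ell=\sigma(i)$ gives
\[
C_{i\sigma(k)}+C_{k\sigma(i)}\le C_{i\sigma(i)}+C_{k\sigma(k)} .
\]
So swapping the targets of an inversion never increases the cost, and it strictly lowers the inversion count. Iterating from any $\sigma$ reaches $\mathrm{id}$ with $C_c(\mathrm{id})\le C_c(\sigma)$. Hence $\mathrm{id}\in\Omega_N$ by the theorem on $m$-cycle stability. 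The inclusion $\Omega_N\subseteq\Omega_2$ was already proved there, so only $\Omega_2\subseteq\Omega_N$ remains.

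\textbf{Step 2 (inversions of a stable permutation are ties).} Let $\sigma\in\Omega_2$. For every inversion $(i,k)$, pairwise stability gives the reverse of the displayed inequality. Therefore equality holds, and the swap $\sigma^{(i,k)}$ has exactly the same cost as $\sigma$.

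\textbf{Step 3 (uncross down to the identity).} I will uncross inversions one at a time, keeping the cost fixed, until the identity is reached; this gives $C_c(\sigma)=C_c(\mathrm{id})$ and hence optimality. Step 2 only applies to the current permutation if it still lies in $\Omega_2$, so the swaps must be chosen to preserve stability.

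I would organize this as an induction on $N$ using the first row. If $\sigma(1)=j\neq1$, let $k=\sigma^{-1}(1)$; the pair $(1,k)$ is an inversion. Swapping it fixes row $1$ and keeps the cost, by Step 2. I would then check that the restriction to rows $2,\ldots,N$ and columns $\neq1$ is still pairwise stable, and apply the induction hypothesis to that restricted Monge array. Pairs not involving row $k$ are unchanged. For a pair $(k,b)$, the tie relation gives
\[
C_{kj}=C_{1j}+C_{k1}-C_{11},
\]
and I would combine it with stability of $\sigma$ at the pairs $(1,b)$ and $(k,b)$ and with \eqref{eq:monge-cost-matrix}.

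\textbf{Main obstacle.} This verification is where I expect trouble. A first attempt shows that each of the two natural combinations needs a Monge inequality pointing the opposite way. So it may be necessary to pick the inversion differently, for instance one adjacent in target value, so that the relevant $2\times2$ minors are themselves ties.

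If the non-strict case resists, I would fall back on the strict Monge inequality. Then Step 2 shows directly that $\sigma\in\Omega_2$ has no inversions, so $\sigma=\mathrm{id}$ and is optimal. The non-strict statement would then be recovered only where it genuinely holds, possibly with an extra tie-breaking hypothesis.
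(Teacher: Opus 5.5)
Your Steps 1 and 2 are correct, and your route is the paper's: uncross inversions, each of which is a tie, until you reach the identity. The paper simply asserts that this can be repeated at unchanged cost. That tacitly uses that every intermediate permutation is still in $\Omega_2$, and you are right to flag it. But your proposal does not supply this fact. Step 3 is left as an unverified check, and your fallback to the strict Monge inequality proves only a weaker proposition. \textbf{That is the gap.} The non-strict statement holds with no tie-breaking hypothesis, and your induction (swap $(1,k)$ with $k=\sigma^{-1}(1)$) does go through. Your difficulty comes from pairing stability with the tie at $(1,k)$, namely $C_{kj}=C_{1j}+C_{k1}-C_{11}$. That relation is the wrong ingredient, and it is what produces the reversed Monge inequality.

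Write $\sigma'=\sigma^{(1,k)}$ and $j=\sigma(1)$. Three kinds of pairs need no work:
\begin{itemize}
\item non-inversion pairs of $\sigma'$, since the Monge condition already makes their swap change nonnegative;
\item pairs avoiding $\{1,k\}$, which are inherited from $\sigma$;
\item pairs involving row $1$, which has no inversions in $\sigma'$.
\end{itemize}
So you only need $C_{k\sigma(b)}+C_{bj}\ge C_{kj}+C_{b\sigma(b)}$ when $(k,b)$ is an inversion of $\sigma'$. There are two cases.
\begin{itemize}
\item If $k<b$ and $\sigma(b)<j$, then $(1,b)$ is an inversion of $\sigma$. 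Its tie gives $C_{bj}-C_{b\sigma(b)}=C_{1j}-C_{1\sigma(b)}$. The requirement becomes $C_{1\sigma(b)}+C_{kj}\le C_{1j}+C_{k\sigma(b)}$, which is the Monge condition for rows $1<k$ and columns $\sigma(b)<j$.
\item If $b<k$ and $\sigma(b)>j$, then $(b,k)$ is an inversion of $\sigma$. Its tie gives $C_{k\sigma(b)}-C_{b\sigma(b)}=C_{k1}-C_{b1}$. The requirement becomes $C_{b1}+C_{kj}\le C_{bj}+C_{k1}$, which is the Monge condition for rows $b<k$ and columns $1<j$.
\end{itemize}
The underlying fact is that the excess $E(i,k;j,\ell)=C_{i\ell}+C_{kj}-C_{ij}-C_{k\ell}\ge 0$ is additive when you split the row or column interval. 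A zero excess on a rectangle therefore forces zero excess on every subrectangle. Each inversion rectangle of $\sigma'$ lies inside an inversion rectangle of $\sigma$. Hence $\sigma'\in\Omega_2$ (the whole permutation, not merely its restriction), and either your induction or the paper's plain repetition then closes the proof.
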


\begin{theorem}[Anchored exchange-circulation condition]
\label{thm:anchored-tdec}
Let $\sigma\in S_N$ be a permutation, and define the directed exchange
increment from $i$ to $j$ by
\begin{equation}
w_{ij}(\sigma)
=
c(x_j,y_{\sigma(i)})
-
c(x_i,y_{\sigma(i)}).
\end{equation}
Its symmetric and antisymmetric parts are given by
\begin{equation}
s_{ij}(\sigma)
=
\frac{w_{ij}(\sigma)+w_{ji}(\sigma)}{2},
\qquad
a_{ij}(\sigma)
=
\frac{w_{ij}(\sigma)-w_{ji}(\sigma)}{2}.
\end{equation}
Thus,
\[
w_{ij}(\sigma)=s_{ij}(\sigma)+a_{ij}(\sigma),
\]
where $s_{ij}=s_{ji}$ and $a_{ij}=-a_{ji}$.

Assume that $\sigma$ is 2-cycle stable, i.e.,
$\sigma\in\Omega_2$. Fix an index $o\in\{1,\ldots,N\}$ and define
the anchored triangle circulation
\begin{equation}
\kappa_{oij}(\sigma)
=
a_{oi}(\sigma)
+
a_{ij}(\sigma)
+
a_{jo}(\sigma).
\end{equation}
Suppose that there exists an anchor $o$ such that
\begin{equation}
\label{eq:anchored-tdec}
\kappa_{oij}(\sigma)
\ge
-s_{ij}(\sigma),
\qquad
\text{for all } i,j\in\{1,\ldots,N\}.
\end{equation}
Then $\sigma$ is globally optimal. Equivalently,
\[
\sigma\in\Omega_N.
\]
\end{theorem}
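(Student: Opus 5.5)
The plan is to write the cost change of an arbitrary cycle as a sum of directed exchange increments along its edges. We then split each increment into its symmetric and antisymmetric parts and use the anchor $o$ to rewrite the antisymmetric part, which lets \eqref{eq:anchored-tdec} be applied edge by edge. The conclusion then follows from the theorem identifying $\Omega_N$ with the set of global minimizers (equivalently, from the cyclic monotonicity theorem).

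\emph{Step 1 (cycle cost as a path sum).} Fix a cycle $\gamma=(i_1,\ldots,i_m)$ with $i_{m+1}=i_1$. Directly from the definitions,
\[
\Delta_\gamma(\sigma)=\sum_{r=1}^m\bigl(c(x_{i_{r+1}},y_{\sigma(i_r)})-c(x_{i_r},y_{\sigma(i_r)})\bigr)=\sum_{r=1}^m w_{i_r i_{r+1}}(\sigma).
\]
Substituting $w=s+a$ gives
\[
\Delta_\gamma(\sigma)=\sum_{r=1}^m s_{i_ri_{r+1}}(\sigma)+\sum_{r=1}^m a_{i_ri_{r+1}}(\sigma).
\]

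\emph{Step 2 (anchoring the antisymmetric part).} Because $a$ is antisymmetric, $a_{i_{r+1}o}=-a_{oi_{r+1}}$. Hence
\[
\sum_{r=1}^m\bigl(a_{oi_r}+a_{i_{r+1}o}\bigr)=\sum_{r=1}^m a_{oi_r}-\sum_{r=1}^m a_{oi_{r+1}}=0,
\]
since the second sum is a cyclic shift of the first. Adding this zero term to the antisymmetric sum yields
\[
\sum_{r=1}^m a_{i_ri_{r+1}}(\sigma)=\sum_{r=1}^m \kappa_{oi_ri_{r+1}}(\sigma).
\]
This identity holds even if $o$ lies on the cycle. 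When $o\in\{i_r,i_{r+1}\}$, the corresponding triangle degenerates, and $\kappa$ reduces to $a_{oo}+a_{oj}+a_{jo}=0$ or to the analogous expression; this is consistent with the identity.

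\emph{Step 3 (conclusion).} Combining Steps 1 and 2,
\[
\Delta_\gamma(\sigma)=\sum_{r=1}^m\bigl(s_{i_ri_{r+1}}(\sigma)+\kappa_{oi_ri_{r+1}}(\sigma)\bigr).
\]
Each summand is nonnegative by \eqref{eq:anchored-tdec}, so $\Delta_\gamma(\sigma)\ge0$ for every cycle of every length $m\le N$. Therefore $\sigma\in\Omega_N$, and the theorem identifying $\Omega_N$ with the argmin gives global optimality.

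The role of 2-cycle stability should be noted. It is equivalent to $s_{ij}=\tfrac12(w_{ij}+w_{ji})=\tfrac12\Delta_{(i,j)}\ge 0$ for all pairs. In the argument above it only serves to make \eqref{eq:anchored-tdec} a meaningful relaxation: the requirement is then just that the anchored circulation is not too negative relative to a nonnegative quantity. The key computation is the telescoping identity of Step 2, which I expect to be the main point requiring care. One must track the orientation conventions $w_{ij}$ versus $w_{ji}$ so that the cycle sum really runs along the edges $i_r\to i_{r+1}$ used in $\Delta_\gamma$, and must check that the degenerate triangles involving $o$ cause no trouble.
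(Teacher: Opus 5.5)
Your proof is correct and is essentially the paper's argument. The paper sets $p_i:=a_{oi}(\sigma)$, shows that $w_{ij}(\sigma)+p_i-p_j=s_{ij}(\sigma)+\kappa_{oij}(\sigma)\ge 0$, and telescopes $p$ around an arbitrary cycle, which is your Step 2 with the zero sum $\sum_r(a_{oi_r}+a_{i_{r+1}o})$ written out; the only cosmetic difference is that the paper finishes by decomposing an arbitrary competitor permutation into disjoint cycles rather than citing the $\Omega_N=\operatorname{argmin}$ theorem, and its reduced-cost form exhibits $p$ as an explicit optimality certificate.
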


The proofs of Proposition 3 and Theorem 5 are provided in Appendices A.2 and A.3, respectively.

Although the sufficient conditions above need not hold in general, the numerical experiments in the next section show that SWAP nevertheless attains near-optimal objective values on the controlled benchmarks, often reaching the certified optimum to numerical precision.

\section{Numerical Experiments}\label{sec:experiments}
We benchmark SWAP in three stages. Section~\ref{sec:imagenet} tests whether the method can solve a genuinely large, high-dimensional assignment problem and illustrates both PCA continuation and the anytime feasibility of its iterates. Section~\ref{sec:synthetic-main} then isolates four solver-level properties under controlled protocols: accuracy against a known optimum, scaling across problem sizes and dimensions, runtime robustness across data geometries, and the efficiency of the default pairwise update relative to higher-order refinement. Finally, Section~\ref{sec:merfish-main} evaluates whether the resulting permutation is useful for downstream spatial alignment, using gene-expression measurements withheld from the solver as an independent evaluation signal.

Unless otherwise stated, SWAP is initialized by a one-dimensional sliced matching. A random direction $\theta\in\mathbb{S}^{d-1}$ is sampled, the source and target points are independently sorted by their projected coordinates, and points of equal projected rank are matched to form the initial permutation. All SWAP results use fixed direction budgets or fixed wall-clock budget, specified separately for each experiment. The competing methods are run using the default configurations of their public implementations.

The evaluated methods differ in the type of object they return. Exact assignment, minibatch assignment, HiRef, and SWAP return permutation-valued solutions, whereas Sinkhorn and HALO return transport couplings. Throughout the experiments, ``Cost'' denotes the original unregularized transport objective under the stated ground cost. For a permutation $\sigma$, this is the mean assignment cost $C(\sigma)=\frac{1}{N}\sum_{i=1}^{N}c(x_i,y_{\sigma(i)}).$
For a coupling $P$, it is the corresponding primal cost $C(P)=\sum_{i,j}P_{ij}c(x_i,y_j).$

All experiments are run on an NVIDIA GeForce RTX 5090. A run is declared time-out (TO) if its wall-clock time exceeds 18,000 seconds and out-of-memory (OOM) if the peak memory consumption exceeds 32 GiB. 
\label{sec:numerical-experiments}

\subsection{Large-Scale ImageNet Assignment}
\label{sec:imagenet}

We begin our empirical study with a large-scale assignment problem based on the ImageNet/ILSVRC training set~\cite{dengImageNetLargeScale2009,russakovskyImageNetLargeScale2015}. Using an ImageNet-pretrained ResNet-50~\cite{heDeepResidualLearning2016}, we extract 2,048-dimensional features from 1,281,000 images and split them evenly into source and target point clouds, each containing $N=$ 640,500 points in dimension $d=$ 2,048. We then seek a bijection minimizing the mean squared Euclidean assignment cost.

\noindent\textbf{Baseline comparison.}
Standard full-matrix solvers such as JV~\cite{jonkerShortestAugmentingPath1987} and Sinkhorn~\cite{cuturiSinkhornDistancesLightspeed2013} are impractical at this scale under our memory budget. We therefore compare SWAP with two large-scale OT baselines, HiRef~\cite{halmosHierarchicalRefinementOptimal2025} and HALO~\cite{xiaMemoryEfficientHierarchical2026}. We additionally report minibatch OT (MB)~\cite{fatrasLearningMinibatchWasserstein2020}, a method commonly used in machine-learning applications. We divide the source and target point clouds into paired minibatches of size $B\in\{128,512,1024\}$ and solve each minibatch assignment exactly. SWAP performs 20,000 directions in the original feature space. 

\begin{table}[!h]
	\caption{Large-scale ImageNet assignment results.}
	\label{tab:imagenet}
	\centering
	\scriptsize
	\setlength{\tabcolsep}{1.8pt}
	\begin{tabular}{@{}lcccc@{}}
		\toprule
		Method
		& Cost
		& Same class
		& Time (s)
		& Mem. (GiB) \\
		\midrule
		SWAP              & 300.596          & 35.77\%          & 128.4          & \textbf{10.87} \\
		SWAP (PCA)     & \textbf{267.034} & \textbf{40.78\%} & \textbf{125.8} & 11.48 \\
		HiRef             & 341.336          & 27.63\%          & 1777.5         & 29.55 \\
		\midrule
		MB ($B=128$)      & 486.158          & 5.04\%          & 4.8    & 9.81 \\
		MB ($B=512$)      & 420.433          & 11.93\%         & 20.4   & 9.82 \\
		MB ($B=1024$)     & 390.064          & 17.00\%         & 40.8   & 9.84 \\
		\bottomrule
	\end{tabular}
\end{table}

As shown in Table~\ref{tab:imagenet}, SWAP attains a lower quadratic cost than every completed baseline. Compared with HiRef, it also requires substantially less time and memory. HALO is omitted from the table because it times out before completing the instance. These results show that SWAP remains practical in the 2,048-dimensional feature space.

Class labels are not used by any method for optimization or endpoint selection. We also report the same-class rate, defined as the fraction of matched image pairs sharing the same ImageNet class label. The same-class results further show that the lower assignment costs achieved by SWAP are accompanied by stronger semantic agreement in the pretrained feature space. These results suggest that SWAP may be useful in deep-learning and image-processing applications that require large-scale one-to-one matching between visual representations.

\noindent\textbf{PCA continuation.}
Since SWAP can resume from any feasible permutation, an assignment obtained in a low-dimensional space can be used directly to warm-start optimization in the original space. We first perform PCA on the combined source and target features and project both point clouds onto the leading 128 components. We run SWAP for 100,000 directions on these 128-dimensional representations and then use the resulting permutation to initialize 8,000 further directions on the original 2,048-dimensional features. The PCA timings in Table~\ref{tab:imagenet} and Fig.~\ref{fig:imagenet-convergence} exclude the 17.48-s preprocessing step.

As shown in Fig.~\ref{fig:imagenet-convergence}, most of the reduction in the original-space assignment cost already occurs during the reduced-space stage before the marked handoff, while the full-dimensional stage further refines the resulting permutation. This behavior illustrates how SWAP can perform most of the search in a less expensive representation and then continue optimizing the resulting feasible permutation under the original objective. The trajectory also demonstrates the anytime property as every intermediate SWAP state can be returned immediately as a feasible one-to-one assignment. Since the trajectory is still decreasing at the fixed-budget endpoint, the reported schedule is intended to illustrate the benefit of PCA continuation rather than the best attainable performance of this strategy.

\begin{figure}[t]
	\centering
	\includegraphics[width=\columnwidth]{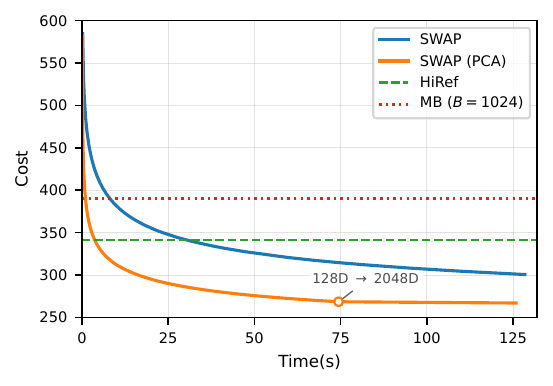}
	\caption{Assignment cost on the ImageNet instance as a function of solver-core time. All displayed costs are evaluated in the original feature space, and the marker indicates the handoff from 128-dimensional PCA to full-dimensional refinement. The dashed and dotted horizontal lines indicate the final costs of HiRef and MB ($B=1024$), respectively.}
	\label{fig:imagenet-convergence}
\end{figure}

\noindent\textbf{Cosine cost.}
Because cosine similarity is widely used in computer vision~\cite{radfordLearningTransferableVisual2021,radenovicFineTuningCNNImage2018,wangNormFaceL2Hypersphere2017}, we also compare the methods on ImageNet using the cosine cost
\[
    c(x,y)=1-\frac{\langle x,y\rangle}{\lVert x\rVert_2\lVert y\rVert_2}.
\]
As shown in Table~\ref{tab:imagenet-cosine}, SWAP again attains the lowest mean cosine cost among the compared methods, while requiring substantially less time and memory than HiRef.

\begin{table}[!h]
	\caption{Cosine-cost ImageNet assignment results.}
	\label{tab:imagenet-cosine}
	\centering
	\scriptsize
	\setlength{\tabcolsep}{1.8pt}
	\begin{tabular}{@{}lcccc@{}}
		\toprule
		Method
		& Cosine cost
		& Same class
		& Time (s)
		& Mem. (GiB) \\
		\midrule
		SWAP       & \textbf{0.1774} & \textbf{36.30\%} & \textbf{131.6} & \textbf{10.90} \\
		HiRef          & 0.1843          & 33.70\%          & 968.6          & 29.55 \\
		\midrule
		MB ($B=128$)  & 0.2881          & 5.12\%          & 3.8   & 9.81 \\
		MB ($B=512$)  & 0.2485          & 12.10\%         & 14.0  & 9.82 \\
		MB ($B=1024$) & 0.2302          & 17.24\%         & 25.3  & 9.84 \\
		\bottomrule
	\end{tabular}
\end{table}

\subsection{Accuracy, Scalability, Robustness, and Efficiency}
\label{sec:synthetic-main}
We consider equally weighted source and target point clouds with the same cardinality and use the squared Euclidean ground cost throughout the synthetic experiments. All methods within a given configuration receive identical source and target samples. We use seed $200$ throughout, and the different problem sizes for a fixed dataset and dimension are nested prefixes of the same master instance. 

\noindent\textbf{Accuracy against the known optimum.}
The first synthetic experiment evaluates how close the permutation returned by SWAP is to a ground-truth optimum. We construct the target point cloud through the gradient of a strongly convex potential. Let
\begin{equation*}
    \phi_d(x)
    =
    \frac{\alpha}{2}\|x\|_2^2
    +
    \frac{\eta}{2}
    \sum_{j=1}^{d-1}(x_{j+1}-x_j)^2
    +
    \beta
    \sum_{j=1}^{d}\log\cosh(x_j),
    \label{eq:planted-potential-main}
\end{equation*}
with
\[
    \alpha=0.80,\qquad
    \eta=0.15,\qquad
    \beta=0.35.
\]
Writing $L_d$ for the path-graph Laplacian, the associated map is
\begin{equation*}
    T^\star_d(x)
    =
    \nabla\phi_d(x)
    =
    \alpha x+\eta L_dx+\beta\tanh(x).
    \label{eq:planted-map-main}
\end{equation*}
We sample
\begin{equation*}
    x_i\sim\mathcal{N}(0,I_{64}),
    \qquad
    y_i^\star=T^\star_{64}(x_i).
\end{equation*}
By Theorem 2.12 in \cite{villani2003topics}, the pairs $(x_i,y_i^\star)$ form an optimal assignment. To construct the actual solver input, we randomly permute the target samples $\{y_i^\star\}_{i=1}^N$, and then initialize each solver on the resulting source--target point clouds according to the common initialization protocol.

We test $N\in\{2^{16},2^{18},2^{20}\}$ in dimension $d=64$ and compare SWAP with HiRef and HALO. In addition to
the transport cost, we report the relative gap to the ground-truth optimum,
\begin{equation}\label{eq:gap}
    \operatorname{Gap}(\sigma)
    =
    \frac{C(\sigma)-C(\sigma^\star)}
         {C(\sigma^\star)} .
\end{equation}
SWAP is run for a fixed budget of 1,000,000 directions.

\begin{table}[t]
\centering
\caption{64D Brenier-map ground-truth assignment results. $-$ denotes a configuration not attempted after the same method had already timed out at a smaller scale.}
\label{tab:planted-gaussian64d}
\scriptsize
\setlength{\tabcolsep}{3.4pt}

\begin{tabular}{llrrr}
\toprule
 & Method & $N=2^{16}$ & $N=2^{18}$ & $N=2^{20}$ \\
\midrule

\multirow{3}{*}{cost}
 & HALO
 & 41.149 & \TO & -- \\

 & HiRef
 & 109.086 & 110.719 & 112.619 \\

 & SWAP
 & \textbf{9.142} & \textbf{9.138} & \textbf{9.178} \\

\addlinespace

\multirow{3}{*}{gap (\%)}
 & HALO
 & 350.106 & \TO & -- \\

 & HiRef
 & 1093.244 & 1111.620 & 1132.452 \\

 & SWAP
 & \textbf{0} & \textbf{0} & \textbf{0.4} \\

\addlinespace

\multirow{3}{*}{time (s)}
 & HALO
 & 757.358 & \TO & -- \\

 & HiRef
 & 433.477 & 1724.218 & 6907.858 \\

 & SWAP
 & \textbf{39.665} & \textbf{159.423} & \textbf{650.448} \\

\addlinespace

\multirow{3}{*}{Mem. (GiB)}
 & HALO
 & 2.234 & \TO & -- \\

 & HiRef
 & \textbf{0.902} & 1.469 & 3.771 \\

 & SWAP
 & {1.439} & \textbf{1.441} & \textbf{2.395} \\

\bottomrule
\end{tabular}

\end{table}

Table~\ref{tab:planted-gaussian64d} shows that SWAP consistently achieves substantially smaller objective gaps than the hierarchical baselines across all tested scales. At the smaller sizes, the fixed direction budget is relatively generous, allowing SWAP to approach the optimum closely. As the problem size increases, the same fixed budget becomes progressively more restrictive, yet SWAP continues to maintain a low optimality gap. With the dimension and direction budget fixed, the observed runtime grows approximately linearly with \(N\) over the tested range, consistent with the $O(Nd+NlogN)$ per-direction complexity of SWAP, while memory usage remains moderate as the problem size increases. HiRef also exhibits approximately linear runtime growth over the tested range, while its absolute runtime is about an order of magnitude larger than that of SWAP, and the cost is consistently higher. HALO ceases to complete the larger instances. 

\noindent\textbf{Scaling across size and dimension.}
Next, we test SWAP with a checkerboard benchmark that varies both problem size and ambient dimension and includes a broader range of OT solvers. Let $K=4$ and partition $[-2,2]^d$ into $K^d$ Cartesian
cells. For $\ell=0,\ldots,K-1$, define
\[
I_\ell =
\begin{cases}
[-2+4\ell/K,\,-2+4(\ell+1)/K), & \ell=0,\ldots,K-2,\\
[-2+4(K-1)/K,\,2], & \ell=K-1,
\end{cases}
\]
and, for $\mathbf{k}=(k_1,\ldots,k_d)\in\{0,\ldots,K-1\}^d$,
\[
C_{\mathbf{k}} = I_{k_1}\times\cdots\times I_{k_d}.
\]
We assign each cell a parity according to
\[
p(\mathbf{k}) = \sum_{j=1}^d k_j \pmod 2.
\]
Source samples are drawn uniformly from cells with
$p(\mathbf{k})=0$, while target samples are drawn uniformly from
cells with $p(\mathbf{k})=1$. We test $d\in\{2,16,64\}, N\in\{2^{13},2^{20}\}.$ Exact and dense Sinkhorn are included at $N=2^{13}$, while HALO, HiRef, and SWAP are evaluated over the
full scalable grid. SWAP is run for $200,000$ directions in every checkerboard configuration.

\begin{table*}[t]
\centering
\caption{Checkerboard assignment results. When the exact solver is available, boldface in the Cost rows indicates the best result among the non-exact methods.}
\label{tab:checkerboard-highdim}
\scriptsize
\setlength{\tabcolsep}{3.6pt}
\renewcommand{\arraystretch}{1.10}

\resizebox{0.98\textwidth}{!}{%
\begin{tabular}{c|l|rrrrr|rrr}
\toprule

\multirow{2}{*}{}
& \multirow{2}{*}{}
& \multicolumn{5}{c|}{$N=2^{13}$}
& \multicolumn{3}{c}{$N=2^{20}$} \\
\cmidrule(lr){3-7}
\cmidrule(lr){8-10}

&
& Exact & Sinkhorn & HALO & HiRef & SWAP
& HALO & HiRef & SWAP
\\
\midrule

\multirow{3}{*}{$d=2$}

& Cost
&0.279&0.317&\textbf{0.279}&0.343&0.282
&\textbf{0.267}&0.332&0.270
\\

& Time (s)
&49.760&4.475&{3.529}&48.149&\textbf{2.964}
&452.942&3836.114&\textbf{71.085}
\\

& Mem. (GiB)
&1.073&3.141&\textbf{0.643}&0.723&{0.787}
&3.914&2.152&\textbf{1.578}
\\

\midrule

\multirow{3}{*}{$d=16$}

& Cost
&9.252&\textbf{9.406}&10.138&11.798&10.948
&\TO&7.791&\textbf{6.740}
\\

& Time (s)
&{4.362}&4.482&20.563&55.905&\textbf{3.396}
&\TO&6842.162&\textbf{89.855}
\\

& Mem. (GiB)
&1.076&3.143&0.783&\textbf{0.723}&{0.803}
&\TO&2.488&\textbf{1.809}
\\

\midrule

\multirow{3}{*}{$d=64$}

& Cost
&94.974&\textbf{95.763}&111.067&106.629&103.028
&\TO&100.370&\textbf{93.615}
\\

& Time (s)
&5.612&{4.478}&23.188&56.042&\textbf{3.990}
&\TO&7017.423&\textbf{132.412}
\\

& Mem. (GiB)
&1.093&3.141&0.795&\textbf{0.740}&{0.830}
&\TO&3.771&\textbf{2.203}
\\

\bottomrule
\end{tabular}
}
\end{table*}

Table~\ref{tab:checkerboard-highdim} shows that SWAP remains competitive across a substantially broader range of dimensions and problem sizes. For most small-scale configurations, the exact solver and Sinkhorn attain lower transport costs, while SWAP consistently uses the least time across all tested dimensions. SWAP nevertheless achieves competitive objective values with relatively low memory usage, and its competitiveness compared to the hierarchical methods improves as the ambient dimension increases. At the large scale, the advantages of SWAP become more pronounced, particularly as the ambient dimension increases. In the higher-dimensional instances, SWAP achieves lower transport costs while using less memory than HiRef, and its computational advantage is also substantial: HALO does not complete within the computational budget, while HiRef requires more than fifty times the runtime of SWAP. Across problem sizes and dimensions, SWAP handles the increase in both $N$ and $d$ with comparatively mild growth in runtime and memory. Moreover, we emphasize that Sinkhorn and HALO return coupling-valued solutions, whereas SWAP maintains an exactly feasible permutation throughout optimization.

\noindent\textbf{Runtime robustness across data geometries.}
We further examine whether the runtime behavior remains stable
across diverse three-dimensional data geometries. We consider 12 fixed
ordered geometry pairs from ModelNet40~\cite{wu20153dshapenets}. 
Fig.~\ref{fig:cross-geometry-runtime} reports the median runtime across the
geometry pairs, with the shaded region showing the
corresponding minimum--maximum range.

\begin{figure}[t]
    \centering
    \includegraphics[width=\columnwidth]
    {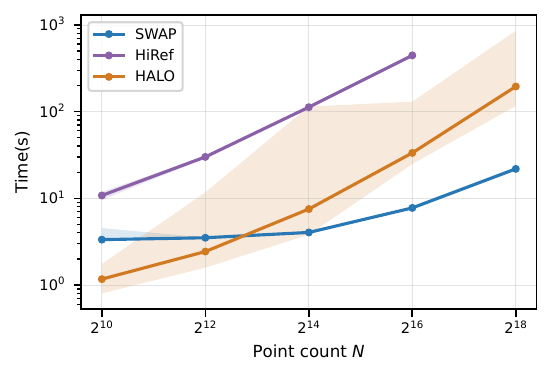}
    \caption{Runtime scaling across 12 fixed ordered ModelNet40 geometry pairs,
including a same-mesh independently sampled control, within-category shape
variations, and substantially different cross-category pairs. SWAP and HALO are evaluated at all scales, HiRef up to \(2^{16}\). At \(N=2^{16}\) and \(N=2^{18}\), HALO encounters out-of-memory failures on 1 and 2 of the 12 geometry pairs, respectively, and the corresponding failed runs are excluded from the plotted statistics.}
    \label{fig:cross-geometry-runtime}
\end{figure}

Figure~\ref{fig:cross-geometry-runtime} shows that SWAP maintains a consistently
narrow runtime range across the tested problem sizes, while exhibiting the same scaling with problem size. The small
variation across substantially different three-dimensional shape pairs indicates
that the runtime of SWAP is largely insensitive to the tested data geometry.
This is consistent with its fixed-budget structure: the dominant computational
workload depends primarily on \(N\), \(d\), and the proposal budget rather than
on the particular geometry of the dataset.

\noindent\textbf{Pairwise Swaps versus Higher-Order Refinement.}
The Owner-Chain construction illustrated in
Fig.~\ref{fig:owner-chain-block} extends the
pairwise SWAP update to higher-order local rearrangements. We examine whether
the additional descent obtained from this refinement justifies its
computational overhead. On the \(d=16\), \(N=2^{20}\) instance of the Brenier-map ground-truth
benchmark introduced earlier in this section, we compare plain SWAP with
variants that insert an Owner-Chain refinement of maximum group size \(m=256\)
every \(r\in\{2\mathrm{k},4\mathrm{k},8\mathrm{k},16\mathrm{k},
32\mathrm{k},64\mathrm{k},128\mathrm{k},256\mathrm{k}\}\) ordinary SWAP
sliced directions, under comparable wall-clock budgets of approximately
\(1,500\,\mathrm{s}\).

\begin{figure}[t]
    \centering
    \includegraphics[width=\columnwidth]
    {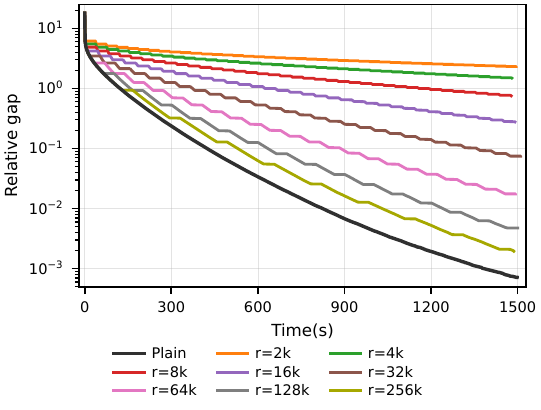}
   \caption{Relative optimality gap, as defined in Eq.~\eqref{eq:gap},
versus wall-clock time on the Brenier-map ground-truth benchmark with
\(d=16\) and \(N=2^{20}\). The eight refinement variants use \(m=256\),
with \(r\) ordinary SWAP sliced directions between successive Owner-Chain
refinement calls.}
    \label{fig:refinement-runtime}
\end{figure}

Figure~\ref{fig:refinement-runtime} shows that all variants substantially
reduce the objective, and higher-order refinement can produce additional local
improvements. These improvements, however, do not compensate for the cost of
the refinement calls: more frequent refinement leads to slower progress in
wall-clock time, while plain SWAP achieves the best efficiency. This supports
pairwise updates as the default choice, with higher-order refinement retained
as an optional but comparatively expensive enhancement.

\subsection{Spatial Alignment of MERFISH Brain Sections}
\label{sec:merfish-main}

We evaluate SWAP on the MERFISH mouse-brain alignment problem considered in HiRef~\cite{halmosHierarchicalRefinementOptimal2025}. The source and target point clouds are Slice~2 Replicate~3 and Slice~2 Replicate~2, respectively, from the Vizgen MERFISH Mouse Brain Receptor Map~\cite{chen2015merfish,vizgenMouseBrainMap}. Following the same preprocessing, the two point clouds are centered independently, the target is rotated counterclockwise by $45^\circ$, and the source is uniformly subsampled so that both point clouds contain \(N=84{,}172\) points. The matching is computed using only the two-dimensional spatial coordinates, with squared Euclidean cost. We compare SWAP, run for 2,000 directions, with HiRef and MB~\cite{fatrasLearningMinibatchWasserstein2020} using batch sizes $B\in\{128,256,512\}$.

Gene-expression measurements are not provided to the solvers and are used only to evaluate the resulting spatial matching. For each gene, the source expression values are transferred to the matched target locations and compared with the observed target values after averaging both fields over $200\mu\mathrm{m}$ square bins. We discard bins containing fewer than five target points and retain 379 genes after excluding blank probes and genes detected in fewer than $1\%$ of the points in either slice. The evaluation score is the median, over these genes, of the cosine similarity between the transferred and observed fields. It therefore quantifies the extent to which a matching computed solely from spatial coordinates preserves gene-expression structure that is not used during matching. This protocol follows previous spatial-alignment studies~\cite{clifton2023stalign,halmosHierarchicalRefinementOptimal2025}.

\begin{figure*}[t]
	\centering
	\includegraphics[width=\textwidth]{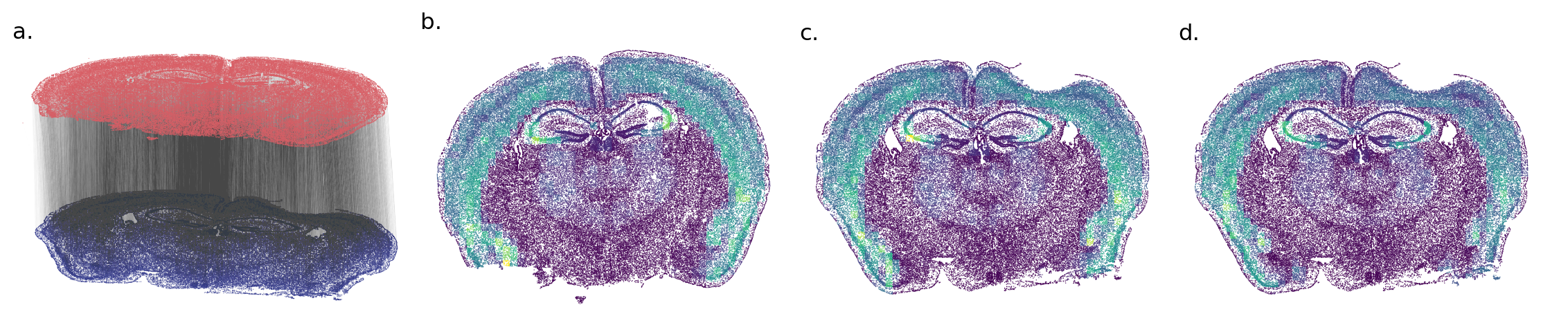}
	\caption{Spatial alignment of two MERFISH mouse-brain sections using SWAP. (a) One-to-one correspondences between the source and target cells. (b) Gene \textit{Slc17a7} abundance in the source section. (c) Source abundance transferred to the target locations through the SWAP assignment. (d) Observed abundance in the target section. Transferred abundances have cosine similarity 0.8858 with the observed abundances in the target.}
	\label{fig:merfish-spatial}
\end{figure*}

\begin{table}[t]
	\caption{MERFISH alignment results. The reported score is the median cosine similarity over 379 retained genes.}
	\label{tab:merfish-results}
	\centering
	\setlength{\tabcolsep}{4.5pt}
	\begin{tabular}{lccc}
		\toprule
		Method & Time (s) & Median cosine & Memory (MiB) \\
		\midrule
		SWAP      & \textbf{3.368}    & \textbf{0.7267} & \textbf{138.2} \\
		HiRef        & 1805.145 & 0.7249 & 988.0 \\
		\midrule
		MB ($B=128$) & 4.464    & 0.6652 & 9.6 \\
		MB ($B=256$) & 5.046    & 0.6921 & 10.2 \\
		MB ($B=512$) & 6.911    & 0.7090 & 12.4 \\
		\bottomrule
	\end{tabular}
\end{table}

Table~\ref{tab:merfish-results} shows that SWAP attains the highest gene-expression similarity among the tested methods while requiring only a small computational cost. It achieves alignment quality comparable to HiRef but is approximately 536x faster, and it consistently outperforms MB in alignment quality while remaining computationally fast. Figure~\ref{fig:merfish-spatial} provides a qualitative example for gene Slc17a7, for which
the abundance transferred by SWAP reproduces the
principal spatial pattern observed in the target section.

\section{Conclusion}
In this work, we proposed SWAP, an iterative permutation-based
framework for high-dimensional and large-scale discrete Monge optimal transport between equally weighted point clouds.
The method updates a feasible transport permutation through
cost-decreasing swap operations and uses random one-dimensional
projections to identify candidate pairs efficiently. We established a
positive-probability guarantee for the sliced proposal mechanism and
showed that repeated independent sliced searches reach pairwise
stability almost surely under mild nonparallel conditions. We further
derived sufficient conditions under which pairwise stability implies
global optimality. Numerical experiments demonstrated favorable
accuracy, scalability, and computational efficiency on large-scale and
high-dimensional problems. While pairwise stability does not guarantee
global optimality in general, the proposed framework provides an
efficient anytime approach for directly optimizing permutation-valued
transport assignments at scales beyond conventional dense assignment methods.

\clearpage
\section*{Appendices}
\setcounter{equation}{0}
\renewcommand{\theequation}{S\arabic{equation}}
\setcounter{figure}{0}
\renewcommand{\thefigure}{S\arabic{figure}}
\setcounter{table}{0}
\renewcommand{\thetable}{S\arabic{table}}
\setcounter{theorem}{0}
\renewcommand{\thetheorem}{S\arabic{theorem}}
\setcounter{proposition}{0}
\renewcommand{\theproposition}{S\arabic{proposition}}
\setcounter{lemma}{0}
\renewcommand{\thelemma}{S\arabic{lemma}}
\setcounter{corollary}{0}
\renewcommand{\thecorollary}{S\arabic{corollary}}

\noindent
This supplementary material provides complete proofs and additional numerical studies that complement the main paper. Appendix A presents detailed proofs of the principal theoretical results. Appendix B provides geometric visualizations of the transport correspondences produced by the compared OT solvers. Appendix C studies the sensitivity of SWAP to randomness in the initialization and proposal directions. Finally, Appendix D presents pixel-level color-transfer examples.

\appendices
\section{Complete Proofs}
\label{app:supp-theory}

This appendix provides detailed proofs of the principal results presented in the main paper.
\subsection{Positive probability of proposing a swap pair}

\begin{proof}
Define
\[
f(\theta)
=
r_X(i;\theta)-r_Z(j;\theta).
\]
For every direction $\theta$ without projection ties, reversing the
projection direction reverses both rankings, and therefore
\[
f(-\theta)=-f(\theta).
\]

We first show that there exists a direction $\theta$ without projection
ties such that
\[
f(\theta)=0.
\]
Choose any direction $\theta_0$ without projection ties. If
\[
f(\theta_0)=0,
\]
the claim follows immediately. Otherwise, assume without loss of
generality that
\[
f(\theta_0)=m>0.
\]
Then
\[
f(-\theta_0)=-m<0.
\]

Since $d\geq2$, $\theta_0$ and $-\theta_0$ can be connected by a
generic continuous path on $\mathbb S^{d-1}$. Along this path, the
rank of $x_i$ changes only when
\[
\langle\theta,x_i-x_k\rangle=0,
\]
while the rank of $z_j$ changes only when
\[
\langle\theta,z_j-z_\ell\rangle=0.
\]
By the nonparallel condition, the path can be chosen so that only one
relevant rank-change event occurs at each crossing. Hence, when the
path crosses such an event, exactly one of the two ranks changes by
one, and therefore
\[
f\longmapsto f\pm1.
\]
Since $f$ changes from $m$ to $-m$ through integer unit steps, one of
the tie-free regions crossed by the path must satisfy
\[
f(\theta)=0.
\]
Thus, there exists a direction $\theta_*$ without projection ties such
that
\[
f(\theta_*)=0.
\]

It remains to show that such directions form an open set. Since
$\theta_*$ has no projection ties, all projected inequalities
determining the two rankings are strict. Hence they remain unchanged
in a sufficiently small neighborhood $\mathcal U$ of $\theta_*$. Therefore,
\[
f(\theta)=0
\qquad
\text{for all }\theta\in\mathcal U.
\]
Thus,
\[
\left\{
\theta\in\mathbb S^{d-1}:
r_X(i;\theta)=r_Z(j;\theta)
\right\}
\]
contains a nonempty open subset of $\mathbb S^{d-1}$ and consequently
has strictly positive probability under a uniformly random projection
direction.
\end{proof}

\subsection{Global optimality under the Monge condition}
    \begin{proof}
    It is well known that, for a Monge cost matrix, the identity
permutation is an optimal solution of the linear assignment problem
\cite{burkard1996perspectives}.

Let \(\sigma\in\Omega_2\). If \(\sigma\) is not the identity
permutation, then it contains an inversion, namely, there exist
indices \(i<k\) such that
\[
    \sigma(i)>\sigma(k).
\]
The Monge condition given in Eq.~\eqref{eq:monge-cost-matrix} implies that
\begin{equation}\label{eq:monge-inversion-swap}
    C_{i,\sigma(k)}+C_{k,\sigma(i)}
    \leq
    C_{i,\sigma(i)}+C_{k,\sigma(k)}.
\end{equation}
Since \(\sigma\in\Omega_2\), no pairwise exchange can strictly
decrease the matching cost. Hence,
\[
    C_{i,\sigma(k)}+C_{k,\sigma(i)}
    \geq
    C_{i,\sigma(i)}+C_{k,\sigma(k)}.
\]
Combining the two inequalities yields
\[
    C_{i,\sigma(k)}+C_{k,\sigma(i)}
    =
    C_{i,\sigma(i)}+C_{k,\sigma(k)}.
\]
Therefore, the inversion can be removed by a pairwise exchange
without changing the total cost.

Repeating this procedure finitely many times removes all inversions.
The resulting permutation \(\widetilde{\sigma}\) satisfies
\[
    \widetilde{\sigma}(1)
    <
    \widetilde{\sigma}(2)
    <
    \cdots
    <
    \widetilde{\sigma}(N).
\]
Since \(\widetilde{\sigma}\) is a permutation of
\(\{1,\ldots,N\}\), it must be the identity permutation. Moreover,
all exchanges used in this procedure preserve the total cost, and
therefore
\[
    C_c(\sigma)=C_c(\operatorname{id}).
\]
Since the identity permutation is globally optimal for a Monge cost
matrix, \(\sigma\) is also globally optimal. Hence,
\[
    \Omega_2\subseteq\Omega_N,
\]
which proves our proposition.
\end{proof}

\subsection{Global optimality under the anchored exchange-circulation condition}
    \begin{proof}
We first relate the symmetric part $s_{ij}$ to the pairwise swap
condition. Exchanging the assignments of $x_i$ and $x_j$ changes the
transport cost by
\begin{align*}
C_c(\sigma^{(i,j)})-C_c(\sigma)
&=
c(x_i,y_{\sigma(j)})
+
c(x_j,y_{\sigma(i)}) \\
&\quad
-
c(x_i,y_{\sigma(i)})
-
c(x_j,y_{\sigma(j)}) \\
&=
w_{ij}(\sigma)+w_{ji}(\sigma) \\
&=
2s_{ij}(\sigma).
\end{align*}
Since $\sigma\in\Omega_2$, no pairwise swap can decrease the
transport cost. Hence
\[
s_{ij}(\sigma)\ge0
\qquad
\text{for all } i,j.
\]

We now use the anchor $o$ to construct a scalar potential. Define
\[
p_i:=a_{oi}(\sigma).
\]
Since the antisymmetric part satisfies $a_{jo}=-a_{oj}$, the anchored
circulation can be written as
\begin{align*}
\kappa_{oij}(\sigma)
&=
a_{oi}(\sigma)+a_{ij}(\sigma)+a_{jo}(\sigma) \\
&=
p_i+a_{ij}(\sigma)-p_j.
\end{align*}
Therefore,
\[
a_{ij}(\sigma)
=
p_j-p_i+\kappa_{oij}(\sigma).
\]
Combining this identity with $w_{ij}=s_{ij}+a_{ij}$ gives
\begin{align*}
w_{ij}(\sigma)+p_i-p_j
&=
s_{ij}(\sigma)
+
a_{ij}(\sigma)
+
p_i-p_j \\
&=
s_{ij}(\sigma)+\kappa_{oij}(\sigma).
\end{align*}
By the anchored exchange-circulation condition
given in Eq.~(6) of the main paper,
\begin{equation}
\label{eq:reduced-nonnegative}
w_{ij}(\sigma)+p_i-p_j\ge0
\qquad
\text{for all } i,j.
\end{equation}

We next show that every cyclic reassignment has nonnegative cost
change. Consider an arbitrary cycle of distinct indices
\[
\gamma=(i_1,i_2,\ldots,i_k),
\qquad
i_{k+1}=i_1,
\]
and let $\sigma^\gamma$ denote the permutation obtained by moving the
target currently assigned to $x_{i_r}$ to $x_{i_{r+1}}$, namely,
\[
\sigma^\gamma(i_{r+1})=\sigma(i_r),
\qquad
r=1,\ldots,k.
\]
The corresponding change in transport cost is
\begin{align*}
C_c(\sigma^\gamma)-C_c(\sigma)
&=
\sum_{r=1}^k
\left[
c(x_{i_{r+1}},y_{\sigma(i_r)})
-
c(x_{i_r},y_{\sigma(i_r)})
\right] \\
&=
\sum_{r=1}^k
w_{i_r i_{r+1}}(\sigma).
\end{align*}
Using \eqref{eq:reduced-nonnegative}, we obtain
\begin{align*}
C_c(\sigma^\gamma)-C_c(\sigma)
&=
\sum_{r=1}^k
\left[
w_{i_r i_{r+1}}(\sigma)
+
p_{i_r}
-
p_{i_{r+1}}
\right] \\
&\qquad
-
\sum_{r=1}^k
\left(
p_{i_r}-p_{i_{r+1}}
\right) \\
&\ge0,
\end{align*}
because the potential terms telescope,
\[
\sum_{r=1}^k
\left(
p_{i_r}-p_{i_{r+1}}
\right)
=0.
\]
Therefore, no cyclic reassignment of any length can decrease the
transport cost.

Finally, any permutation relative to $\sigma$ can be decomposed into
a collection of disjoint cycles. Since each such cycle has
nonnegative cost change, no permutation has a lower transport cost than
$\sigma$. Hence $\sigma$ is globally optimal, and therefore
\[
\sigma\in\Omega_N.
\]
Moreover, the construction above provides an explicit optimality
certificate: the anchor-based potential
$p_i:=a_{oi}(\sigma)$ satisfies
\[
w_{ij}(\sigma)+p_i-p_j\ge 0
\qquad
\text{for all } i,j.
\]
\end{proof}

\section{Geometric Transport Visualization}
\label{app:supp-permutation}
We visualize the geometric correspondence represented by the output of each OT solver. We use a two-dimensional checkerboard with $N=2^{10}$ and seed $200$. For a permutation $\sigma$ returned by Exact, SWAP and HiRef, every segment $x_i\mapsto y_{\sigma(i)}$ is drawn.
For a coupling $P$ returned by Sinkhorn and HALO, we instead show the row-wise barycentric map
$$
    x_i \mapsto \bar y_i(P),
    \qquad
    \bar y_i(P)=\frac{\sum_j P_{ij}y_j}{\sum_j P_{ij}}.
    \label{eq:supp-barycentric-map}
$$
The barycentric endpoints summarize where each source row sends mass, but need not coincide with target samples and do not themselves define a feasible
permutation. All panels use the same source and target samples.

Figure~\ref{fig:supp-transport-visualization} shows that the SWAP permutation follows essentially the same block-to-block transport pattern as the exact
assignment while preserving a one-to-one map. Sinkhorn produces visibly interior barycentric endpoints, reflecting mass spread across several targets, whereas HALO produces a highly concentrated coupling on this instance and therefore appears close to a permutation under the same visualization.  HiRef remains permutation-valued but exhibits a different set of long correspondences. 

\begin{figure*}[t]
\centering
\includegraphics[width=0.98\textwidth]{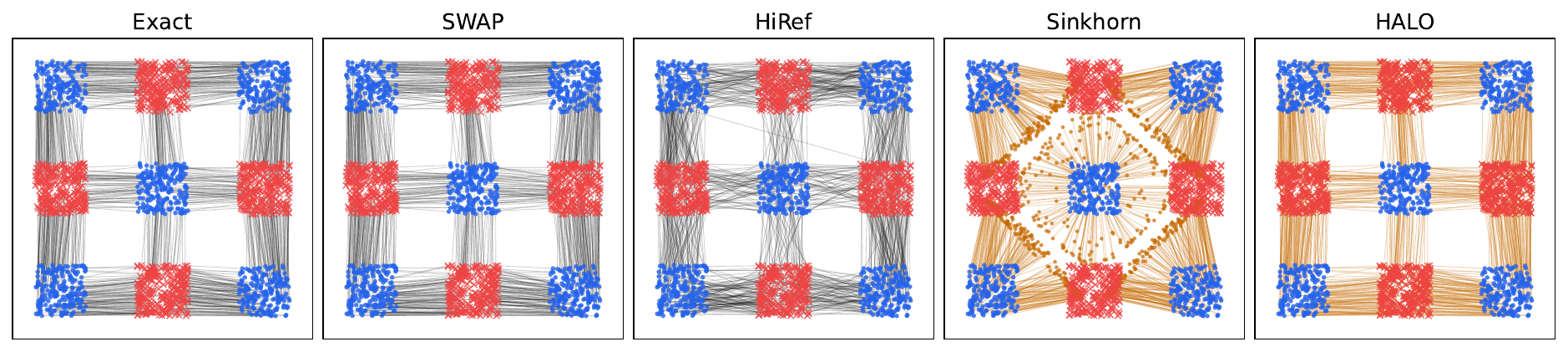}
\caption{Geometric transport visualization. Blue points denote the source samples and red crosses denote the target samples. The orange points indicate the barycentric projections.}
\label{fig:supp-transport-visualization}
\end{figure*}

\section{Sensitivity to Algorithmic Randomness}
\label{app:supp-randomness}

SWAP uses randomness in both the initialization and the subsequent
proposal directions.  We isolate these two sources on the Brenier-map ground-truth problem in $d=64$ at $N=2^{16}$.  Each study contains $50$
runs with $100{,}000$ sliced directions. 

\subsection{Randomness in the initialization}
\label{app:supp-initialization}
The first study examines randomness in the initialization by varying the
random projection direction used to construct the initial permutation, while
keeping the subsequent sequence of proposal directions fixed. As shown in Fig.~\ref{fig:supp-init-randomness}, SWAP rapidly suppresses most
of the variation introduced by the random initialization while continuing to
decrease the objective. The early inset highlights the initial sensitivity on
a relative scale, whereas the inset over the final $1,000$ sliced directions shows that
the trajectories have collapsed into a very narrow band. This demonstrates
that initialization-induced differences are progressively attenuated during
optimization and have only a limited effect on the late-stage objective.

\begin{figure}[H]
\centering
\includegraphics[width=0.86\columnwidth]{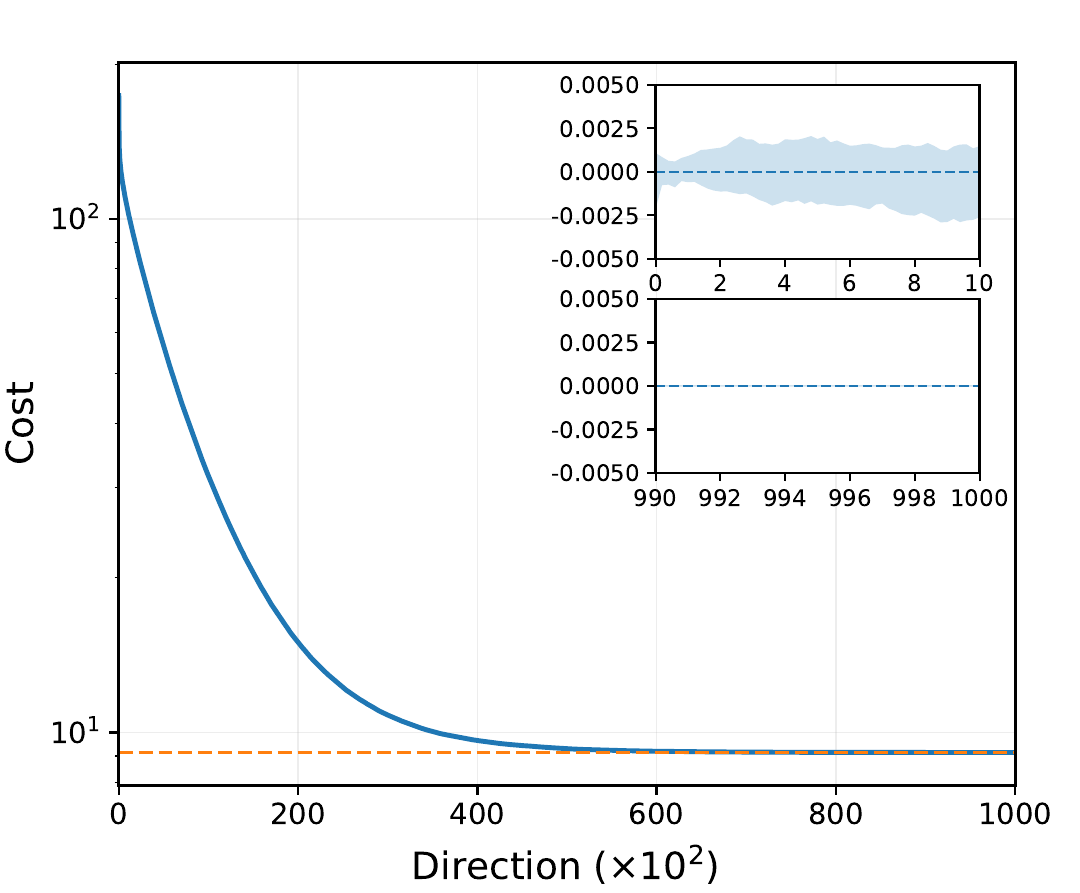}
\caption{Sensitivity to the initialization. The solid curve is the pointwise median cost and the shaded region shows the minimum--maximum range.
The upper and lower insets report this range relative to the pointwise median.}
\label{fig:supp-init-randomness}
\end{figure}

\subsection{Randomness in the sliced proposal}
\label{app:supp-schedule}

The second study fixes the initial permutation and varies only the sequence of sliced proposal directions.  Figure~\ref{fig:supp-schedule-randomness} shows
the same qualitative behavior: proposal-dependent differences are visible during the transient, but they remain controlled and contract as the objective approaches the late-stage plateau. The plotting conventions are the same as in Figure ~\ref{fig:supp-init-randomness}. Different proposal paths can explore the same problem differently while reaching essentially the same objective
neighborhood.

\begin{figure}[H]
\centering
\includegraphics[width=0.86\columnwidth]{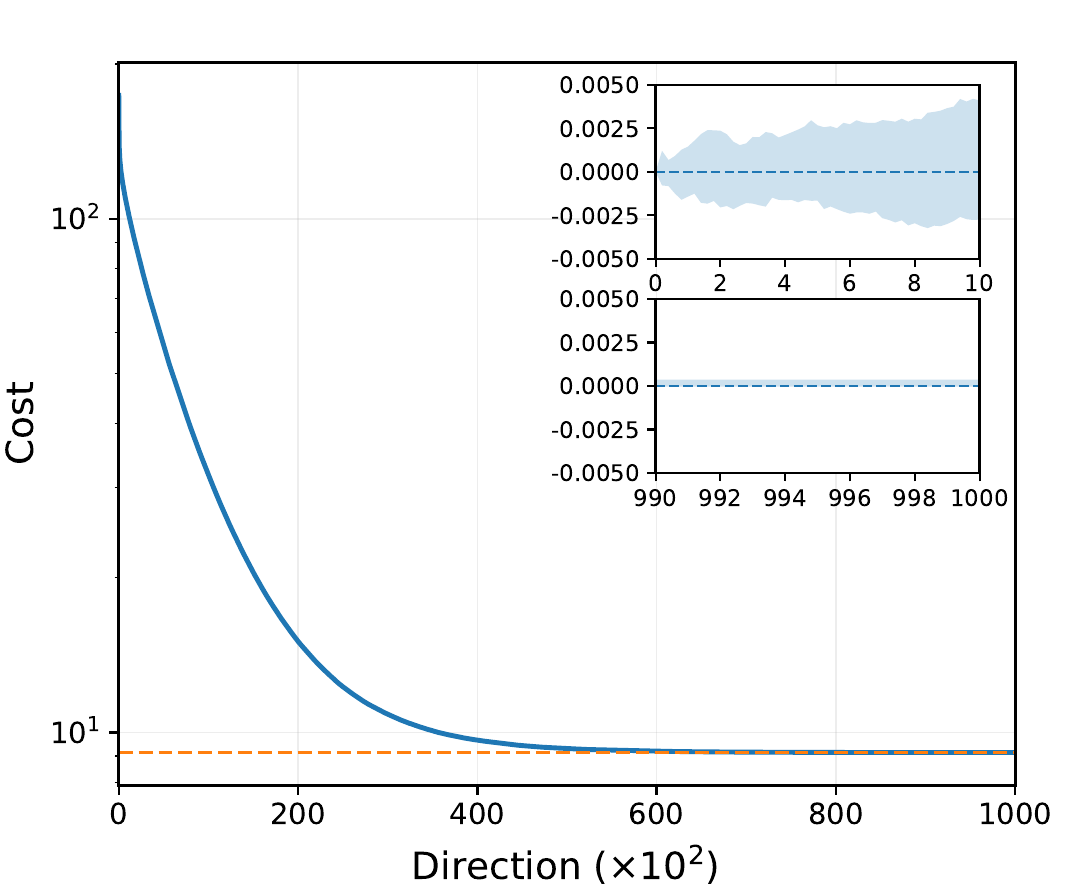}
\caption{Sensitivity to the sliced proposal directions.}
\label{fig:supp-schedule-randomness}
\end{figure}
\FloatBarrier

\section{Pixel-Level Color Transfer}
\label{app:supp-color-transfer}

SWAP can be applied directly to pixel-level color transfer at image resolution. 
Each image is resized to \(512\times512\), yielding
\(N=2^{18}\) pixels. Following the chroma-transfer formulation~\cite{solomon2015convolutional}, we
represent each source pixel by its CIE-Lab chromatic coordinates
\(u_i=(a_i^*,b_i^*)\), and each reference pixel by \(v_j\). We then solve the
permutation problem
$$
\min_{\sigma\in S_N}
\frac{1}{N}\sum_{i=1}^N
\lVert u_i-v_{\sigma(i)}\rVert_2^2.
$$
The transferred image preserves the source luminance \(L_i^*\) and spatial
arrangement, while assigning to each source pixel the reference chroma
\(v_{\sigma(i)}\). Hence the source determines image structure and luminance,
whereas the reference contributes a one-to-one chroma palette. The computation is
performed directly on the pixel sets and does not require forming an
\(N\times N\) cost matrix. Representative results are shown in
Fig.~\ref{fig:supp-color-transfer}.

\begin{figure*}[t]
\centering
\includegraphics[width=0.96\textwidth,height=0.25\textheight,keepaspectratio]{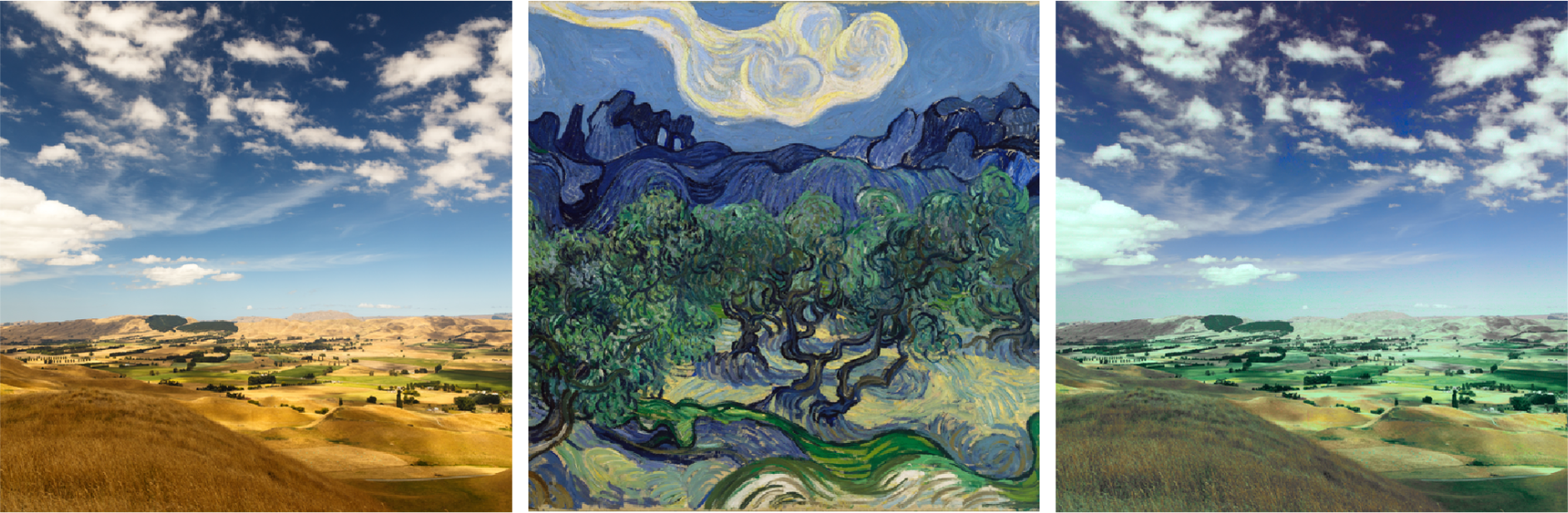}\\[0.5em]
\includegraphics[width=0.96\textwidth,height=0.25\textheight,keepaspectratio]{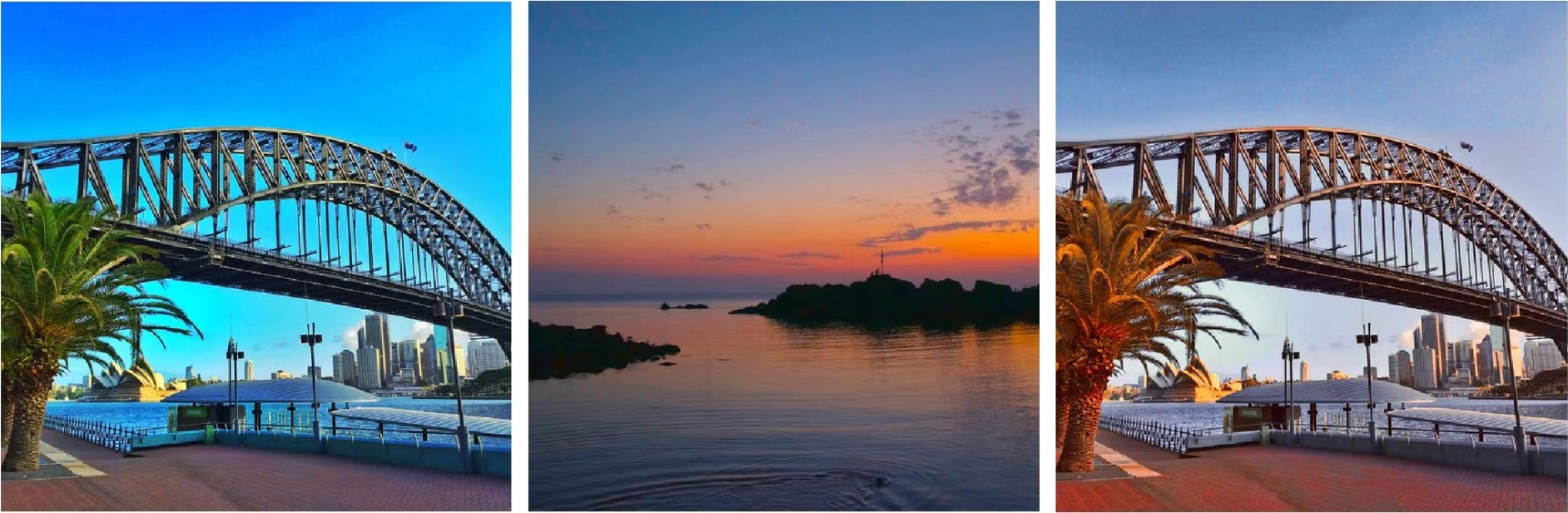}\\[0.5em]
\includegraphics[width=0.96\textwidth,height=0.25\textheight,keepaspectratio]{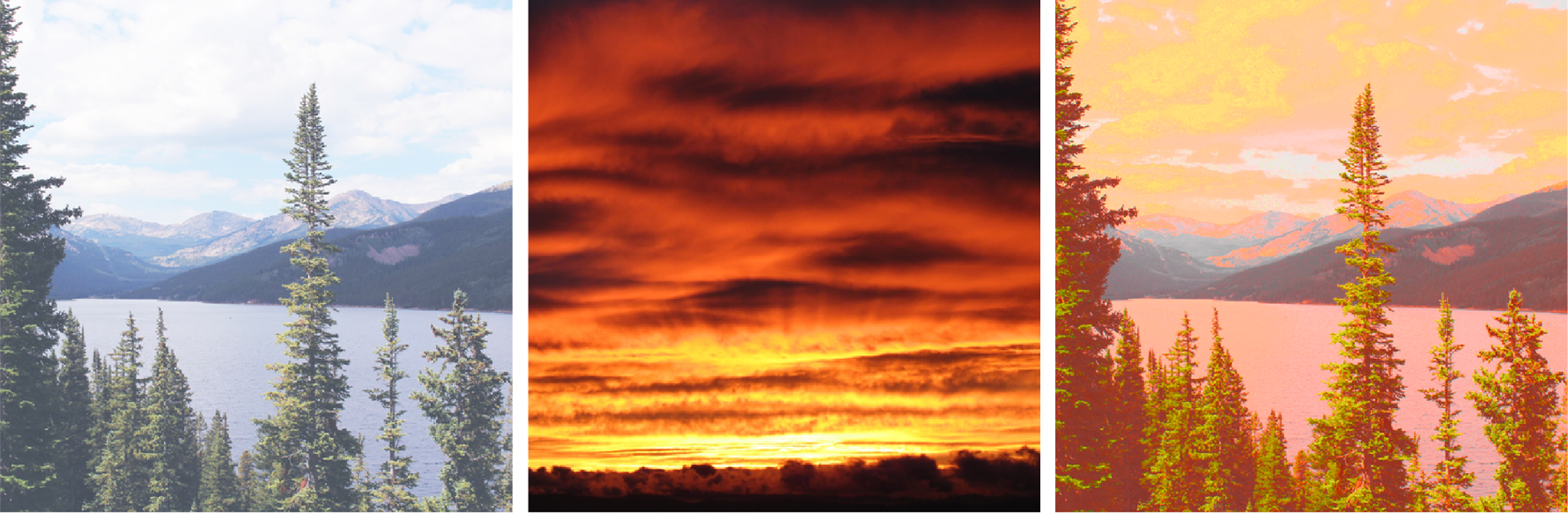}\\[0.5em]
\caption{Selected \(512\times512\) pixel-level color-transfer examples. In each row, the panels show, from left to right, the source image, the reference image, and the transferred result.}
\label{fig:supp-color-transfer}
\end{figure*}

\clearpage
\twocolumn
\bibliographystyle{IEEEtran}
\bibliography{refs}
\end{document}